\documentclass[12pt]{article}

%packages
\usepackage{amsfonts, amsmath, amssymb}
\usepackage{amsthm}
\usepackage[english]{babel}
\usepackage{hyperref}
%\usepackage[margin=1in]{geometry}

%definitions
\newtheorem{theorem}{Theorem}[section]

\newtheorem{lemma}[theorem]{Lemma}

%Header
\title{Properties of sets of Subspaces with Constant Intersection Dimension}
\date{\today}
\author{
Lisa Hernandez Lucas
\footnote{Address: Department of Mathematics, Vrije Universiteit Brussel, Pleinlaan 2, 1050 Brussels, Belgium \newline
Email address: \href{mailto:lisa.hernandez.lucas@vub.be}{lisa.hernandez.lucas@vub.be} \newline
Website: \url{http://homepages.vub.ac.be/~lihernan/}
}}

\begin{document}

\maketitle

\begin{abstract}
A $(k,k-t)$-SCID (set of Subspaces with Constant Intersection Dimension) is a set of $k$-dimensional vector spaces that have pairwise intersections of dimension $k-t$. Let $\mathcal{C}=\{\pi_1,\ldots,\pi_n\}$ be a $(k,k-t)$-SCID. Define $S:=\langle \pi_1, \ldots, \pi_n \rangle$ and $I:=\langle \pi_i \cap \pi_j \mid 1 \leq i < j \leq n \rangle$. We establish several upper bounds for $\dim S + \dim I$ in different situations. We give a spectrum result for the case $(n-1)(k-t)\leq k$ and for the case $n\leq\frac{q^{t(n-\eta)}-1}{q^t-1}$, giving examples of $(k,k-t)$-SCIDs reaching a large interval of values for $\dim S + \dim I$.
\end{abstract}

\section{Introduction}

% Definitie SCID in vectorruimte. Verwijzen naar bron waar definitie SCID gegeven is voor projectieve ruimtes.
% Uitleggen waarom dit hetzelfde is als equidistant codes. + bronnen random network coding

Let $\mathcal{V}$ be a vector space over a finite field $\mathbb{F}_q$. Let $k$ and $t$ be integers such that $t \leq k$. A set $\mathcal{C}$ of $k$-dimensional subspaces of $\mathcal{V}$ that have pairwise intersections of dimension $k-t$ is called a $(k,k-t)$-\emph{SCID}. The acronym SCID stands for \emph{a set of Subspaces with Constant Intersection Dimension}.\par
SCIDs are introduced in \cite{SCID}, where a similar definition is given in terms of projective spaces instead of vector spaces. For our purposes however, the ambient space will always be a vector space $\mathcal{V}$ over a finite field $\mathbb{F}_q$ of order $q$. The dimension of this vector space $\mathcal{V}$ does not need to be predefined. \par
In the domain of coding theory, SCIDs are better known as \emph{equidistant codes}.
These codes are relevant in a \emph{random network coding} setting, where information is sent through a network with varying topology. This network is depicted as a directed multigraph where the information has to be transmitted from the \emph{sources} to the \emph{sinks} through some intermediary nodes. Within network coding, these intermediary nodes apply coding to the received inputs, instead of simply routing them. It was shown in \cite{NIF} that the maximal information rate of a network with one source can be achieved by applying this technique. When the order of the ground field is large enough, it is sufficient to only apply \emph{linear} network coding, where the nodes transmit linear combinations of the input they receive \cite{LNC}. 
%When the order of the ground field is large enough, one can achieve maximal information rate by applying \emph{linear} network coding, where the nodes transmit linear combinations of the input they receive \cite{LNC}. 
\emph{Random network coding} is when the nodes output \emph{random} linear combinations of the input, instead of using a predefined scheme. The concept and benefits of \emph{random network coding} in a multi-source setting are explored in \cite{random}, and later on approached mathematically through \emph{subspace codes} in \cite{RNC}. \par
%These codes are relevant for \emph{random network coding}, a concept which is introduced in \cite{RNC}. The use of random network coding is recommended for transmitting information through a network with varying topology, because it can speed up the process [bron].
A subspace code is a code that has vector subspaces as codewords. Note that this is different from the classical case, where the codewords are vectors. The distance between two codewords $U$ and $V$ from a subspace code is called the \emph{subspace distance} and is defined as $d (U,V)=\dim U+\dim V - 2\dim (U\cap V)$. When all codewords have the same dimension, the code is called a \emph{constant dimension code}. When in addition all pairwise intersections have the same dimension, then the distance between any two codewords is constant. In this case we say we have an \emph{equidistant code}, see e.g. \cite{equi}. It should now be clear why these codes correspond to SCIDs. \par
 Note that in the definition of a SCID, it is required that all pairwise intersections have the \emph{same dimension}. It is not necessary that they all coincide. If this is the case, then the SCID is called a $(k,k-t)$-\emph{sunflower}. The $(k-t)$-space that all the elements of the sunflower have in common, is often called the \emph{center} of the sunflower. Sunflowers have been investigated before, see for instance \cite{sunflower}. \par
Another special case of SCIDs occurs when $k=t$, i.e. when every two distinct elements intersect trivially. A $(k,0)$-SCID is called a \emph{partial $k$-spread}.  Note that a partial $k$-spread is also a $(k,0)$-sunflower with trivial center. Partial spreads are studied thoroughly within the domain of finite geometry, see for example \cite{ms} and \cite{seg}.\par

From now on,  we will assume that a SCID contains at least two elements. The following lemma follows directly from the definitions:
\begin{lemma}\label{lemma}
A subset of a $(k,k-t)$-SCID (resp. $(k,k-t)$-sunflower), containing at least two elements, is again a $(k,k-t)$-SCID (resp. $(k,k-t)$-sunflower).
\end{lemma}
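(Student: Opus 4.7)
The plan is very short, because this lemma is essentially a definitional unwinding rather than a substantive result; the author themselves signals this by saying it follows directly from the definitions. I would organize the proof as two parallel arguments: one for the SCID case and one for the sunflower case.

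First I would fix a $(k,k-t)$-SCID $\mathcal{C} = \{\pi_1, \ldots, \pi_n\}$ and an arbitrary subset $\mathcal{C}' \subseteq \mathcal{C}$ with $|\mathcal{C}'| \geq 2$. Every element of $\mathcal{C}'$ is a $k$-dimensional subspace (since it belongs to $\mathcal{C}$), and for any two distinct $\pi_i, \pi_j \in \mathcal{C}'$, the pair is also a pair in $\mathcal{C}$, so $\dim(\pi_i \cap \pi_j) = k-t$. This is exactly the defining property of a $(k,k-t)$-SCID, so $\mathcal{C}'$ is one.

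Next I would handle the sunflower case. Suppose $\mathcal{C}$ is a $(k,k-t)$-sunflower with center $Z$, that is, $\pi_i \cap \pi_j = Z$ for all $i \neq j$. Any subset $\mathcal{C}'$ with at least two elements is already a $(k,k-t)$-SCID by the previous paragraph, and the pairwise intersections of its elements, being pairwise intersections of elements of $\mathcal{C}$, are all equal to $Z$. Hence $\mathcal{C}'$ is itself a $(k,k-t)$-sunflower with the same center $Z$.

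There is no real obstacle here; the only thing worth being explicit about is the hypothesis $|\mathcal{C}'| \geq 2$, which is needed because the notions of SCID and sunflower are defined via \emph{pairwise} intersections and thus are vacuous (or at least not meaningfully constrained) on a single subspace. This is also consistent with the author's convention, stated just above the lemma, that SCIDs contain at least two elements.
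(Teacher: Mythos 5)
Your proof is correct and matches the paper's treatment: the paper gives no explicit proof, stating only that the lemma follows directly from the definitions, and your argument is exactly that definitional unwinding for both the SCID and sunflower cases.
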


Let $\mathcal{C}=\{\pi_1,\ldots,\pi_n\}$ be a $(k,k-t)$-SCID with $n$ elements. Define the following two spaces:
\begin{align*}
S&:=\langle \pi_1, \ldots, \pi_n \rangle  ,\\
I&:=\langle \pi_i \cap \pi_j \mid 1 \leq i < j \leq n \rangle  
.\end{align*}

Intuitively, when the space $S$ has large dimension, the elements of $\mathcal{C}$ are further apart, causing the dimension of $I$ to be smaller and vice versa. This raises the question: \emph{is it possible to give an upper bound on $\dim{S}+\dim{I}$?} \par
The article is structured as follows: In Section \ref{upp}, we establish several upper bounds for different situations. In Section \ref{spec}, we give a spectrum result for the case $(n-1)(k-t)\leq k$ and for the case $n\leq\frac{q^{t(n-\eta)}-1}{q^t-1}$, giving examples of $(k,k-t)$-SCIDs reaching a large interval of values for $\dim S + \dim I$.

\section{Upper bounds on $\dim S + \dim I$}\label{upp}
In this section, we justify the intuition from the previous section by giving upper bounds on the sum $\dim S + \dim I$. When an upper bound is established on this sum, it is clear that for a large dimension of $S$, the dimension of $I$ must be small, and vice versa. We give several upper bounds and compare them for different values of $n \geq 2$, $k$ and $t$. At the end of this section, a summary of the best bounds is given in Table \ref{tabel}.\par
Theorem \ref{main} gives a bound that is valid for all values of $n \geq 2$, $k$ and $t$. \par

\begin{theorem}\label{main}
Let $\mathcal{C}=\{\pi_1,\ldots,\pi_n\}$ be a $(k,k-t)$-SCID, $n\geq 2$. Define $S:=\langle \pi_1, \ldots, \pi_n \rangle$ and $I:=\langle \pi_i \cap \pi_j  \mid 1 \leq i < j \leq n \rangle$. Then:
\[
\dim S + \dim I \leq nk
.\]
\end{theorem}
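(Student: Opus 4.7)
The plan is to expand $\dim S$ telescopically, compare the resulting error terms with $\dim I$, and then simply add. The crucial ingredient is the standard dimension formula
\[
\dim(A+B) = \dim A + \dim B - \dim(A\cap B),
\]
applied iteratively to the chain $\pi_1,\,\pi_1+\pi_2,\,\ldots,\,\pi_1+\cdots+\pi_n$.

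First I would set $A_j := \pi_j \cap \langle\pi_1,\ldots,\pi_{j-1}\rangle$ for $j=2,\ldots,n$ and show by induction on $j$ that
\[
\dim\langle\pi_1,\ldots,\pi_j\rangle \;=\; jk - \sum_{i=2}^{j}\dim A_i.
\]
Taking $j=n$ yields
\[
\dim S \;=\; nk - \sum_{j=2}^{n}\dim A_j.
\]
This step only uses $\dim \pi_i = k$ and the dimension formula, so it is routine.

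Next I would bound $\dim I$ from above by the same sum. The key observation is that for every pair $i<j$ one has $\pi_i\cap\pi_j \subseteq \pi_j \cap \langle\pi_1,\ldots,\pi_{j-1}\rangle = A_j$, simply because $\pi_i \subseteq \langle\pi_1,\ldots,\pi_{j-1}\rangle$. Hence
\[
I \;=\; \langle \pi_i\cap\pi_j \mid i<j\rangle \;\subseteq\; \langle A_2,\ldots,A_n\rangle,
\]
and therefore
\[
\dim I \;\leq\; \sum_{j=2}^{n}\dim A_j.
\]
Adding this to the expression for $\dim S$ makes the two sums cancel and gives $\dim S + \dim I \leq nk$.

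There is no real obstacle here; the only subtlety worth emphasising in the write-up is that the argument does not use the SCID hypothesis that the pairwise intersections all have dimension $k-t$ — the bound $\dim S+\dim I\le nk$ holds for any collection of $n$ subspaces of dimension $k$. The SCID hypothesis becomes essential only for the sharper bounds that will follow in later theorems.
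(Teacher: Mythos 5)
Your proof is correct, and it is essentially the paper's argument in unrolled form: the paper proves the bound by induction on $n$, adjoining $\pi_n$ to $S':=\langle\pi_1,\ldots,\pi_{n-1}\rangle$ and pairing the gain $\dim I-\dim I'\le\dim\langle\pi_i\cap\pi_n\mid i<n\rangle$ against the loss $\dim S-\dim S'\le k-\dim(\pi_n\cap S')$, which amounts to your telescoping identity together with the containment $\langle\pi_i\cap\pi_n\mid i<n\rangle\subseteq\pi_n\cap S'=A_n$. Your closing observation that the constant-intersection-dimension hypothesis is never needed (only $\dim\pi_i=k$) is accurate, and in fact applies equally to the paper's own proof.
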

\begin{proof}
The proof is by induction on the number of spaces $n$. For the induction base, assume $n=2$. Then $\mathcal{C}=\{\pi_1,\pi_2\}$. Hence, $S=\langle \pi_1,\pi_2\rangle$ has dimension $k+t$ and $I=\pi_1\cap\pi_2$ has dimension $k-t$. In this case, $\dim S + \dim I=2k$, agreeing with the theorem. \par
Now assume the theorem is true for $n-1$. Define $\mathcal{C'}:=\{\pi_1,\ldots,\pi_{n-1}\}$, then $\mathcal{C'}$ is a $(k,k-t)$-SCID by Lemma \ref{lemma}. Hence, if we define $S':=\langle \pi_1, \ldots, \pi_{n-1} \rangle$ and $I':=\langle \pi_i \cap \pi_j \mid  1 \leq i < j \leq n-1 \rangle$, then $\dim S' + \dim I' \leq (n-1)k$, by the induction hypothesis. \par
Define $A:=\langle\pi_1 \cap \pi_n,\pi_2\cap\pi_n,\ldots,\pi_{n-1}\cap\pi_n\rangle$, so $A$ is the space spanned by all intersections of $\pi_1,\ldots,\pi_{n-1}$ with the space $\pi_n$. Then $k-t \leq \dim A \leq k$, since $\pi_1\cap\pi_n \subseteq A \subseteq \pi_n$. Note that $I = \langle I',A \rangle$, such that:
\[
\dim I = \dim I' + \dim A - \dim (A \cap I')
.\]
Let $0 \leq \delta \leq t$ be such that $\dim A = k-t+\delta$, then:
\begin{align}\label{I}
\dim I \leq \dim I' + k-t+\delta
.\end{align}
On the other hand, from $S=\langle S', \pi_n \rangle$, it follows:
\[
\dim S = \dim S' + \dim \pi_n - \dim (\pi_n \cap S')
.\]
But $A \subseteq \pi_n \cap S'$, hence $\dim (\pi_n \cap S') \geq \dim A = k-t+\delta$. Together with $\dim \pi_n=k$, this results in the following inequality:
\begin{align}\label{S}
\dim S \leq \dim S' + k - (k-t+\delta) = \dim S' + t -\delta
.\end{align}
Combining (\ref{I}) and (\ref{S}) with the induction hypothesis, we find:
\begin{align*}
\dim S + \dim I & \leq \dim S' + t -\delta + \dim I' + k-t+\delta \\
& \leq (n-1)k + k\\
& \leq nk
,\end{align*}
which concludes the proof.
\end{proof}

The natural question that arises now is whether this bound is sharp. In Theorem \ref{construction}, a construction of a SCID reaching this upper bound is given, under the assumption that $(n-1)(k-t)\leq k$. Hence, under this assumption, the bound given in Theorem \ref{main} is sharp. \par

\begin{theorem}\label{construction}
Let $\mathcal{C}=\{\pi_1,\ldots,\pi_n\}$ be a $(k,k-t)$-SCID. Define $S:=\langle \pi_1, \ldots, \pi_n \rangle$ and $I:=\langle \pi_i \cap \pi_j \mid  1 \leq i < j \leq n \rangle$. Let $(n-1)(k-t)\leq k$. Then
\[
\dim S + \dim I = nk
\]
if and only if there exist $(k-t)$-spaces $V_{ij}$, for each $1\leq i<j\leq n$, and $(k-(n-1)(k-t))$-spaces $U_i$, for each $1\leq i \leq n$, such that the following conditions hold:
\begin{enumerate}
\item For each $1\leq i < j \leq n$, $V_{ij}=\pi_i \cap \pi_j$.
\item For each $1 \leq i \leq n$, $\pi_i = \langle U_i, \pi_i \cap \pi_j \mid  i\neq j\rangle$.
\item The dimension of the span of all the spaces above is maximal, i.e.,
\begin{align*}
&\dim \langle U_i, V_{lj} \mid  i \in \{1,\ldots, n\} \text{ and } 1\leq l < j \leq n \rangle \\
&=n(k-(n-1)(k-t))+\frac{n(n-1)}{2}(k-t).
\end{align*} 
\end{enumerate} 
\end{theorem}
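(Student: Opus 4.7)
The proof is an equivalence, so the plan is to handle the two directions separately; sufficiency is a direct dimension count, while necessity goes by induction on $n$.

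For sufficiency I would observe that condition~3 says the space $\langle U_i, V_{lj}\rangle$ attains the maximum possible dimension, namely the sum of the dimensions of its summands. Hence all the $U_i$ and $V_{lj}$ sit in direct sum, which immediately gives $\dim I = \binom{n}{2}(k-t)$. Condition~2 forces each $\pi_i$ into this span, so the span equals $S$ and $\dim S = n\bigl(k-(n-1)(k-t)\bigr)+\binom{n}{2}(k-t) = nk - \binom{n}{2}(k-t)$. Adding yields $nk$.

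For necessity I would induct on $n$; the base $n=2$ amounts to picking any $t$-dimensional complement of $\pi_1\cap\pi_2$ in each of $\pi_1,\pi_2$. For the inductive step, I would first note that $\dim S+\dim I = nk$ forces $\dim S^{(j)} + \dim I^{(j)} = (n-1)k$ for each $\mathcal{C}^{(j)}:=\mathcal{C}\setminus\{\pi_j\}$: the chain of inequalities in the proof of Theorem~\ref{main} (with $\pi_j$ playing the role of $\pi_n$) would otherwise yield $\dim S+\dim I < nk$. Rerunning that same argument also gives the sharpness identities $I^{(j)}\cap A_j = \{0\}$ and $\pi_j\cap S^{(j)} = A_j$, where $A_j := \langle \pi_i\cap\pi_j \mid i\neq j\rangle$. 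Since $(n-2)(k-t) \leq k$, the induction hypothesis applies to $\mathcal{C}^{(j)}$ and yields $\dim I^{(j)} = \binom{n-1}{2}(k-t)$ together with the direct-sum description of each $\pi_i$ ($i\neq j$) in terms of the $V_{im}$, $m\neq i,j$.

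The central step is to promote this to the equality $\dim \langle V_{ij} : j \neq i\rangle = (n-1)(k-t)$ inside every $\pi_i$. Fixing $i$ and choosing any $j_0 \neq i$, the induction hypothesis applied to $\mathcal{C}^{(j_0)}$ puts the $V_{im}$ with $m\neq i, j_0$ in direct sum inside $\pi_i$, with total dimension $(n-2)(k-t)$. The remaining summand $V_{ij_0}$ lies inside $A_{j_0}$, while the sum of the others lies in $I^{(j_0)}$; the sharpness equality $A_{j_0}\cap I^{(j_0)} = \{0\}$ then keeps $V_{ij_0}$ independent, producing the desired total dimension. Condition~2 then follows by taking $U_i$ to be any $(k-(n-1)(k-t))$-dimensional complement in $\pi_i$; condition~1 is automatic; and condition~3 follows from $\dim I = \dim I^{(j_0)} + \dim A_{j_0} = \binom{n}{2}(k-t)$, the hypothesis $\dim S + \dim I = nk$, and the fact that the span appearing in condition~3 coincides with $S$. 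The main obstacle will be this central step: it is where the global equality, the inductive hypothesis on smaller SCIDs, and the sharpness conditions from the proof of Theorem~\ref{main} must be combined; once the direct-sum statement is in hand, the remaining verifications reduce to routine dimension counting.
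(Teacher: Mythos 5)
Your proposal is correct and follows essentially the same route as the paper: sufficiency by summing the dimensions of the $U_i$ and $V_{ij}$, and necessity by induction on $n$ using the fact that $\dim S+\dim I=nk$ forces equality in the two inequalities from the proof of Theorem \ref{main}, hence $A\cap I'=\{0\}$ and $\pi_n\cap S'=A$. The one point of divergence is the step showing the intersections through a fixed $\pi_i$ are in direct sum: you apply the induction hypothesis to several deleted subfamilies $\mathcal{C}^{(j_0)}$ and use $A_{j_0}\cap I^{(j_0)}=\{0\}$ directly, whereas the paper works only with $\mathcal{C}\setminus\{\pi_n\}$ and asserts $V_{in}\subseteq U_i'$; your handling of that step is, if anything, the more careful of the two.
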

\begin{proof}
First note that to find the dimension of the space in the third condition, we can just sum up the dimensions of the spaces $U_i$ and $V_{lj}$. This gives us the formula in the third condition:
\begin{align*}
\sum_{i=1}^{n}{\dim U_i} + \sum_{1\leq i < j \leq n}{\dim V_{ij}} = n(k-(n-1)(k-t))+\frac{n(n-1)}{2}(k-t)
.\end{align*}
\par
For the first part of the proof, assume that all the enlisted conditions hold for $\mathcal{C}$. Then we want to prove that $\dim S + \dim I=nk$. The third condition implies that the span of all spaces $V_{ij}$, with $1\leq i < j \leq n$, must be maximal. So to find the dimension of $I$, we just need to sum up the dimensions of the spaces $V_{ij}$:
\begin{align}\label{I2}
\dim I = \sum_{1\leq l < j \leq n}{\dim V_{lj}}=\frac{n(n-1)}{2}(k-t)
.\end{align}
On the other hand, the first two conditions imply that $S=\langle \pi_1, \ldots, \pi_n \rangle=\langle U_i, V_{lj} \mid  i \in \{1,\ldots, n\} \text{ and } 1\leq l < j \leq n \rangle$. The third condition immediately implies:
\begin{align}\label{S2}
\dim S = n(k-(n-1)(k-t))+\frac{n(n-1)}{2}(k-t)
.\end{align}
Combining (\ref{I2}) and (\ref{S2}), we get that $\dim S+\dim I=nk$.
%the following equality:
%\begin{align*}
%\dim S + \dim I &= \frac{n(n-1)}{2}(k-t)+n(k-(n-1)(k-t))+\frac{n(n-1)}{2}(k-t) \\
%&=nk
%.\end{align*}
This completes the first part of the proof. \par
The remainder of the proof is again by induction on the size $n$ of $\mathcal{C}$. For the induction base, assume $n=2$. Then it is not hard to see that the enlisted conditions must hold. \par
Now assume that the theorem is true for any $(k,k-t)$-SCID with size $n-1$ and that we have a $(k,k-t)$-SCID $\mathcal{C}=\{\pi_1,\ldots,\pi_n\}$ of size $n$ such that $\dim S + \dim I=nk$. Now define $\mathcal{C'}:=\{\pi_1,\ldots,\pi_{n-1}\}$, $S':=\langle\pi_1,\ldots,\pi_{n-1}\rangle$ and $I':=\langle\pi_i\cap\pi_j \mid 1\leq i < j \leq n-1\rangle$. Then, by Lemma \ref{lemma}, $\mathcal{C'}$ is a $(k,k-t)$-SCID. \par
Note that we only can have $\dim S + \dim I$ maximal, if equality holds in (\ref{I}) and (\ref{S}) in every induction step of the proof of Theorem \ref{main}. Adding up these two equalities, we find that $\dim S + \dim I = \dim S' + \dim I' + k$, implying that also $\dim S'+\dim I'=(n-1)k$ must be maximal. Applying the induction hypothesis on $\mathcal{C'}$ now gives us the following:
\begin{enumerate}
\item There exist $(k-t)$-spaces $V_{ij}$, for each $1\leq i <j \leq n-1$, such that $V_{ij}=\pi_i\cap\pi_j$.
\item There exist $(k-(n-2)(k-t))$-spaces $U'_i$, for each $1 \leq i \leq n-1$, such that $\pi_i = \langle U'_i, \pi_i \cap \pi_j  \mid i\neq j\rangle$.
\item The span of the spaces above has maximal dimension.
% I.e. 
%\begin{align*}
%&\dim \langle U'_i, V'_{lj} \mid  i \in \{1,\ldots, n-1\} \text{ and } 1\leq l < j \leq n-1 \rangle \\
%&=(n-1)(k-(n-2)(k-t))+\frac{(n-1)(n-2)}{2}(k-t).
%\end{align*} 
\end{enumerate}
Define $A:=\langle\pi_1 \cap \pi_n,\pi_2\cap\pi_n,\ldots,\pi_{n-1}\cap\pi_n\rangle$, and $\delta \geq 0$ such that $\dim A = k-t+\delta$.
As remarked before, equality must hold in (\ref{I}) in the proof of Theorem \ref{main}:
\[
\dim I = \dim I'+k-t+\delta
,\]
hence, $\dim (A \cap I')=0$. Now define $V_{in}:=\pi_i\cap\pi_n$, for all $1\leq i <n$. Then $\dim(A\cap I')=0$ implies that $V_{in}\subseteq U'_i$, so the third property implies that the span $\langle V_{ij} \mid 1\leq i < j \leq n \rangle$ has maximal dimension. Analogously as in the first part of the proof, we now find that $\dim I = \frac{n(n-1)}{2}(k-t)$.\par
Now choose $(k-(n-1)(k-t))$-spaces $U_i \subseteq U'_i$, for $1\leq i < n$, such that $U'_i=\langle U_i, V_{in}\rangle$. Also choose a $(k-(n-1)(k-t))$-space $U_n$ such that $\pi_n=\langle V_{in}, U_n \mid 1\leq i < n \rangle$. For these choices of $V_{ij}$, with $1\leq i<j \leq n$, and $U_i$, with $1\leq i \leq n$, the first two conditions are fulfilled. \par
Note that $S=\langle U_i, V_{lj} \mid  i \in \{1,\ldots, n\} \text{ and } 1\leq l < j \leq n \rangle$, while for the dimension of $S$ we have:
\[
\dim S = nk-\dim I= nk - \frac{n(n-1)}{2}(k-t)
,\]
which is exactly the sum of the dimensions of the spaces $V_{ij}$, with $1\leq i<j \leq n$ and the spaces $U_i$, with $1\leq i \leq n$. This is precisely what the third condition states.
\end{proof}

Note that the condition $(n-1)(k-t)\leq k$ is necessary for the construction in Theorem \ref{construction} to work. Moreover, it follows from the proof that if this condition doesn't hold, then there exists no $(k,k-t)$-SCID with $\dim S + \dim I = nk$. This means that the bound given in Theorem \ref{main} is sharp if and only if the condition $(n-1)(k-t)\leq k$ holds.

The objective now is to gain more insight in what happens if the condition $(n-1)(k-t) \leq k$ doesn't hold. For this purpose, it is useful to consider the case where $n=3$. \par
In the case $2(k-t)>k$, the bound from Theorem \ref{main}, although valid, cannot be sharp. Lemma \ref{three} provides an upper bound that is also valid for all values of $k$ and $t$, but which is in fact an improvement in the case $2(k-t)>k$. \par

\begin{lemma}\label{three}
Let $\mathcal{C}=\{\pi_1,\pi_2,\pi_3\}$ be a $(k,k-t)$-SCID. Define $S:=\langle \pi_1,\pi_2,\pi_3 \rangle$ and $I:=\langle\pi_1\cap\pi_2,\pi_1\cap\pi_3,\pi_2\cap\pi_3\rangle$. Then $\dim S + \dim I \leq 2(k+t)$.
\end{lemma}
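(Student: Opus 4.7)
The plan is to introduce the triple intersection $T := \pi_1 \cap \pi_2 \cap \pi_3$, set $\tau := \dim T$, and bound $\dim S$ and $\dim I$ separately in terms of $\tau$. Writing $P_{ij} := \pi_i \cap \pi_j$ (each of dimension $k-t$), the key observation is that for any two such pairwise intersections sharing a common index, say $P_{12}$ and $P_{13}$, we have $P_{12} \cap P_{13} = \pi_1 \cap \pi_2 \cap \pi_3 = T$, so $\dim(P_{12} + P_{13}) = 2(k-t) - \tau$, and similarly for the other pairs.

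To bound $\dim S$, I would apply the dimension formula to $S = (\pi_1 + \pi_2) + \pi_3$. Since $\dim(\pi_1 + \pi_2) = k+t$ and $P_{13} + P_{23} \subseteq (\pi_1 + \pi_2) \cap \pi_3$, we obtain
\[
\dim S \;\leq\; (k+t) + k - \bigl(2(k-t) - \tau\bigr) \;=\; 3t + \tau.
\]
To bound $\dim I$, I would write $I = (P_{12} + P_{13}) + P_{23}$; since $T$ is contained in both summands, $\dim\bigl((P_{12} + P_{13}) \cap P_{23}\bigr) \geq \tau$, hence
\[
\dim I \;\leq\; \bigl(2(k-t) - \tau\bigr) + (k-t) - \tau \;=\; 3(k-t) - 2\tau.
\]
Adding these gives $\dim S + \dim I \leq 3k - \tau$.

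It remains to exploit the lower bound $\tau \geq k - 2t$. This follows immediately from the fact that $P_{12} + P_{13} \subseteq \pi_1$ has dimension at most $k$, i.e.\ $2(k-t) - \tau \leq k$. Plugging this in yields $\dim S + \dim I \leq 3k - (k - 2t) = 2(k+t)$ whenever $k \geq 2t$. In the remaining case $k < 2t$, the claimed bound $2(k+t) > 3k$ is weaker than Theorem \ref{main}, which already gives $\dim S + \dim I \leq 3k$, so nothing further is needed.

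I do not expect a serious obstacle: the proof is a two-fold application of the sum-intersection formula with $T$ tracked carefully. The main subtlety is recognising that the same parameter $\tau$ controls both $\dim S$ and $\dim I$ in opposite directions, so that the bounds combine cleanly and the extremal information about $\tau$ (from containment in a single $\pi_i$) is exactly what closes the argument.
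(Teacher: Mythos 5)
Your proof is correct and follows essentially the same route as the paper: both bound $\dim S$ and $\dim I$ via the dimension of the triple intersection (your $\tau$ is the paper's $k-t-\epsilon$), add the two bounds to get $3k-\tau$, and close the argument with $\tau \geq k-2t$ obtained from $\pi_1\cap\pi_2 + \pi_1\cap\pi_3 \subseteq \pi_1$. Your final case split on $k \geq 2t$ is unnecessary, since $\tau \geq k-2t$ and hence $3k-\tau \leq 2(k+t)$ hold regardless of the sign of $k-2t$, but it does no harm.
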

\begin{proof}
Define $\epsilon \geq 0$, such that $\dim (\pi_1\cap\pi_2\cap\pi_3)=k-t-\epsilon$. \\
Then:
\begin{align*}
\dim I &= k-t+2\epsilon \\
\dim S & \leq k+t+t-\epsilon,
\end{align*}
such that $\dim S + \dim I \leq k+t+t-\epsilon +k-t+2\epsilon = 2k+t+\epsilon$. Note however that $\epsilon \leq k- (k-t) = t$, such that $\dim S + \dim I \leq 2k+2t$.
\end{proof}

Note that the equality $\dim S+\dim I=2(k+t)$ only occurs when $\epsilon=t$. In that case, we have $\pi_1=\langle \pi_1\cap \pi_2,\pi_1\cap\pi_3\rangle$, $\pi_2=\langle \pi_1\cap \pi_2,\pi_2\cap\pi_3\rangle$ and $\pi_3=\langle \pi_2\cap \pi_3,\pi_1\cap\pi_3\rangle$. \par
Inspired by this lemma, we prove a new bound on $\dim S + \dim I$ that is valid for all values of $n \geq 3$, $k$ and $t$. \par

\begin{theorem}
Let $\mathcal{C}=\{\pi_1,\ldots,\pi_n\}$ be a $(k,k-t)$-SCID, with size $n\geq 3$. Define $S:=\langle\pi_1,\ldots,\pi_n\rangle$ and $I:=\langle\pi_i\cap\pi_j \mid 1\leq i<j\leq n\rangle$. Then:
\[
\dim S + \dim I \leq (n-1)k +2t
.\]
\end{theorem}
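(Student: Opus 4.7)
The plan is to prove this bound by induction on $n$, with Lemma \ref{three} serving as the base case at $n=3$, since that lemma states precisely $\dim S + \dim I \leq 2(k+t) = (3-1)k + 2t$.

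For the inductive step, I would assume the inequality for $n-1$ and set $\mathcal{C}' = \{\pi_1,\ldots,\pi_{n-1}\}$, $S' = \langle \pi_1,\ldots,\pi_{n-1}\rangle$, $I' = \langle \pi_i\cap\pi_j \mid 1 \leq i < j \leq n-1\rangle$. By Lemma \ref{lemma}, $\mathcal{C}'$ is again a $(k,k-t)$-SCID of size $n-1 \geq 3$, so the induction hypothesis yields $\dim S' + \dim I' \leq (n-2)k + 2t$.

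The key observation is that the proof of Theorem \ref{main} actually establishes the stronger local inequality $\dim S + \dim I \leq \dim S' + \dim I' + k$ at every induction step, via inequalities \eqref{I} and \eqref{S} with the parameter $\delta$ cancelling on addition. More precisely, with $A := \langle \pi_i \cap \pi_n \mid 1 \leq i \leq n-1\rangle$ of dimension $k-t+\delta$, one obtains $\dim I \leq \dim I' + k - t + \delta$ and $\dim S \leq \dim S' + t - \delta$, whose sum is $\dim S' + \dim I' + k$, independent of $\delta$. I would simply cite this step from the earlier proof rather than re-derive it.

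Combining the induction hypothesis with this cancellation immediately gives
\[
\dim S + \dim I \leq \dim S' + \dim I' + k \leq (n-2)k + 2t + k = (n-1)k + 2t,
\]
completing the induction. There is essentially no obstacle beyond recognising that the hard content is already packaged: the nontrivial saving over Theorem \ref{main} comes entirely from the base case $n=3$ (Lemma \ref{three}), where the triple intersection $\pi_1\cap\pi_2\cap\pi_3$ is exploited to trade $k$ for $2t$; the induction then just propagates this saving forward, adding $k$ per new space.
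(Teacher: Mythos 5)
Your proof is correct, but it is organised differently from the paper's. The paper does not induct on the statement itself: it applies Theorem \ref{main} directly to $\mathcal{C}'=\{\pi_1,\ldots,\pi_{n-1}\}$ to get $\dim S'+\dim I'\leq (n-1)k$, and then shows that adjoining $\pi_n$ costs at most $2t$ rather than $k$, by proving the lower bound $\dim(A\cap I')\geq k-2t$ --- obtained from the fact that $\pi_1\cap\pi_n$ and $B:=\langle\pi_1\cap\pi_j\mid 2\leq j\leq n-1\rangle$ are two subspaces of the $k$-space $\pi_1$, each of dimension at least $k-t$. You instead run a genuine induction, seeding with Lemma \ref{three} at $n=3$ and propagating with the $\delta$-cancellation $\dim S+\dim I\leq \dim S'+\dim I'+k$ extracted from the proof of Theorem \ref{main}; that local inequality is indeed established there and applies whenever $\mathcal{C}'$ is a SCID with at least two elements, so your induction closes. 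The two arguments ultimately rest on the same geometric fact, namely that two $(k-t)$-subspaces of a $k$-space meet in dimension at least $k-2t$, but you spend it once at the base case while the paper spends it at the top step. Your route is slightly leaner, citing only results already proved; the paper's route has the advantage of isolating the quantity $\dim(A\cap I')$ and its bound $k-2t$, which is exactly what is refined by the case distinction ($=k-2t$ versus $>k-2t$) in the proof of the sharper Theorem \ref{ugly} immediately afterwards.
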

\begin{proof}
Define $S':=\langle\pi_1,\ldots,\pi_{n-1}\rangle$ and $I':=\langle\pi_i\cap\pi_j \mid 1\leq i<j\leq n-1\rangle$. Then, by Theorem \ref{main}, we know that:
\begin{align}\label{ind}
\dim S' +\dim I' \leq (n-1)k
.\end{align}
Now define $A:=\langle\pi_1\cap\pi_n,\ldots,\pi_{n-1}\cap\pi_n\rangle$ and let $\delta\geq 0$ be such that $\dim A = k-t+\delta$. \par
Note that $I=\langle I',A\rangle$, such that:
\begin{align}\label{Idim}
 \dim I = \dim I' + \dim A - \dim (A \cap I'). 
\end{align} Define $B:=\langle\pi_1\cap\pi_2,\pi_1\cap\pi_3,\ldots,\pi_1\cap\pi_{n-1}\rangle$, the space spanned by all the intersections with $\pi_1$, except for $\pi_1\cap\pi_n$. Then $\pi_1\cap\pi_2\subseteq B$, such that $\dim B \geq k-t$. Moreover we have $\langle \pi_1\cap\pi_n,B\rangle \subseteq \pi_1$, such that:
\begin{center}
	$\dim \pi_1 \geq \dim(\pi_1\cap\pi_n) + \dim B - \dim (\pi_1\cap\pi_n\cap B)$ \\
	$\Downarrow$ \\
	$k \geq k-t + k-t - \dim (\pi_1\cap\pi_n\cap B)$ \\
	$\Downarrow$ \\
	$\dim (\pi_1\cap\pi_n\cap B) \geq k-2t.$
\end{center}
Since $\pi_1\cap\pi_n\cap B\subseteq A\cap I'$, it follows that $\dim( A\cap I' )\geq k-2t$. Combining this with (\ref{Idim}) we get:
\begin{align}\label{I3}
\dim I \leq \dim I' + k-t+\delta-(k-2t)=\dim I'+t+\delta.
\end{align}
On the other hand, $S=\langle S',\pi_n\rangle$ and $A\subseteq S'\cap \pi_n$, such that:
\begin{align}\label{S3}
\dim S \leq \dim S' + \dim \pi_n -\dim A \leq \dim S' + t - \delta.
\end{align}
Combining (\ref{I3}) and (\ref{S3}) with (\ref{ind}), we find:
\begin{align*}
\dim S + \dim I &\leq \dim S' + t -\delta + \dim I' + t + \delta \\
& \leq (n-1)k + 2t.
\end{align*}
\end{proof}

Comparing this new bound to the bound given in Theorem \ref{main}, we can now distinguish three cases:
\begin{itemize}
	\item[$k<2t$] In this case, $nk < (n-1)k+2t$, so the bound from Theorem \ref{main} is the best bound we have. By Theorem \ref{construction}, this bound is sharp if and only if the inequality $(n-1)(k-t)\leq k$ holds.
	\item[$k=2t$] Now we have $nk=(n-1)k+2t$, such that the new bound is the same as the bound given in Theorem \ref{main}. Note that in this case $(n-1)(k-t)\leq k \Leftrightarrow n \leq 3$, such that the bound is only sharp for $n \leq 3$. For $n>3$, we will show in Theorem \ref{ugly} that $\dim S + \dim I \leq nk-(n-3)$. We don't know whether this bound is sharp.
	\item[$k>2t$] Then $(n-1)k+2t < nk$, such that the new bound is an improvement compared to Theorem \ref{main}. In Theorem \ref{ugly}, we will show that $\dim S + \dim I \leq 2k + 2(n-2)t-(n-3)$. Note that, given $k>2t$, we have for $n\geq 3$:
	\begin{align*}
	2k + 2(n-2)t-(n-3) &= 2k + (n-3)2t + 2t - (n-3)\\
	& < 2k + (n-3)k + 2t \\
	& = (n-1)k+2t,
	\end{align*}
	such that Theorem \ref{ugly} indeed gives us a better bound for $n\geq 3$.
\end{itemize}

\begin{theorem}\label{ugly}
For $n\geq 3$, let $\mathcal{C}=\{\pi_1,\ldots,\pi_n\}$ be a $(k,k-t)$-SCID, with $k\geq 2t$. Define $S:=\langle\pi_1,\ldots,\pi_n\rangle$ and $I:=\langle\pi_i\cap\pi_j \mid 1\leq i<j\leq n\rangle$. Then:
\[
\dim S + \dim I \leq 2k+2(n-2)t-(n-3)
.\]
\end{theorem}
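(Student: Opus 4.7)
The plan is to proceed by induction on $n \geq 3$. The base case $n = 3$ is exactly Lemma \ref{three}, since $2(k+t) = 2k + 2(3-2)t - (3-3)$. For the inductive step ($n \geq 4$), set $\mathcal{C}' := \{\pi_1,\ldots,\pi_{n-1}\}$, define $S', I'$ analogously, and put $A := \langle \pi_1 \cap \pi_n,\ldots,\pi_{n-1}\cap\pi_n\rangle$. Repeating the additivity argument from the proofs of Theorem \ref{main} and the preceding theorem yields
$$\dim S + \dim I \leq \dim S' + \dim I' + k - \dim(A \cap I').$$
Combined with the induction hypothesis $\dim S' + \dim I' \leq 2k + 2(n-3)t - (n-4)$, the task reduces either to showing $\dim(A \cap I') \geq k - 2t + 1$ (an increment of at most $2t - 1$, which closes the induction) or to arguing directly in some structured exceptional case.

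The natural auxiliary spaces are $B_i := \langle \pi_i \cap \pi_j \mid j \in \{1,\ldots,n-1\}\setminus\{i\}\rangle \subseteq \pi_i$, each of dimension at least $k - t$. I would split on whether any $B_i$ strictly exceeds $k - t$. If some $\dim B_i \geq k - t + 1$, then the dimension formula inside $\pi_i$ gives $\dim(B_i \cap \pi_i \cap \pi_n) \geq (k-t+1) + (k-t) - k = k - 2t + 1$, and since $B_i \cap \pi_i \cap \pi_n \subseteq A \cap I'$ this is exactly the sharpening needed.

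In the opposite case $\dim B_i = k - t$ for every $i \in \{1,\ldots,n-1\}$, each $B_i$ is generated by $(k-t)$-dimensional subspaces of itself, forcing all $\pi_i \cap \pi_j$ with $j \neq i$ to coincide; since $n - 1 \geq 3$, these common subspaces match across different values of $i$, so $\mathcal{C}'$ is a sunflower with center $W$ of dimension $k - t$. Here the induction hypothesis is too loose, so I compute directly: $\dim S' = (k-t) + (n-1)t$ and $\dim I' = k - t$, giving $\dim S' + \dim I' = 2k + (n-3)t$. Combined with the standard $\dim(A \cap I') \geq k - 2t$ (from $W$ and $\pi_1 \cap \pi_n$ both having dimension $k-t$ inside $\pi_1$), this yields
$$\dim S + \dim I \leq 2k + (n-1)t \leq 2k + 2(n-2)t - (n-3),$$
the last inequality being $(n-3)(t-1) \geq 0$, which holds since $t \geq 1$ (otherwise all elements of $\mathcal{C}$ would coincide, contradicting $n \geq 3$ distinct subspaces).

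The main obstacle is the sunflower case: the induction hypothesis is not tight there, so rather than seeking a better estimate on $\dim(A \cap I')$ I exploit the full structural description of $\mathcal{C}'$ to compute $\dim S' + \dim I'$ exactly, and the slack $(n-3)(t-1)$ absorbs the gap. One should also check carefully that ``all $B_i$ minimal'' genuinely forces a sunflower — this needs $n - 1 \geq 3$, which is precisely why the base case $n = 3$ is separated off.
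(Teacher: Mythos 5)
Your proof is correct, and its skeleton (induction on $n$ with base case Lemma \ref{three}, the combined estimate $\dim S + \dim I \leq \dim S' + \dim I' + k - \dim(A\cap I')$, and the observation that $\dim(A\cap I') \geq k-2t+1$ supplies the extra $-1$ needed to close the induction) coincides with the paper's. Where you genuinely diverge is in the treatment of the exceptional case. The paper splits on the value of $\dim(A\cap I')$ itself: when it equals $k-2t$ exactly, all triple intersections $\pi_i\cap\pi_j\cap\pi_n$ are forced to coincide, so the traces $\pi_i\cap\pi_n$ form a $(k-t,k-2t)$-sunflower inside $\pi_n$, and both $S$ and $I$ are then trapped in a $(k+t)$-dimensional space $\langle\pi_n,X\rangle$, giving $\dim S+\dim I\leq 2(k+t)$. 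You instead split on the dimensions of the spaces $B_i$, and in the degenerate case you conclude that $\mathcal{C}'$ itself is a sunflower, replacing the induction hypothesis by the sharper direct estimate $\dim S'+\dim I'\leq 2k+(n-3)t$ and absorbing the missing unit into the slack $(n-3)(t-1)\geq 0$. The two case splits are not equivalent (your degenerate case strictly contains the paper's, since the paper's Case 1 forces all $B_i$ to have dimension $k-t$ but not conversely), yet both arguments are sound; the paper's route buys the stronger conclusion $\dim S+\dim I\leq 2(k+t)$ in its exceptional case, while yours avoids invoking $k\geq 2t$ to identify $\pi_i\cap\pi_j\cap\pi_n$ with $A\cap I'$. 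Two cosmetic points: $\dim S'=(k-t)+(n-1)t$ in the sunflower case should be an inequality $\leq$, which is the direction you need anyway, and the matching of the centers $B_i$ across indices already works for $n-1=2$ (a two-element family is trivially a sunflower), though separating off $n=3$ as the base case makes this moot.
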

\begin{proof}
The proof is by induction on $n$. \par
For the induction base, consider $n=3$. If $k>2t$, it follows from Lemma \ref{three} that $\dim I + \dim S \leq 2k+2t$, agreeing with the theorem. If $k=2t$, then we have by Theorem \ref{main} that $\dim S + \dim I \leq 3k=2k+2t$. \par
Now assume that the theorem is true for $(k,k-t)$-SCIDs with $n-1$ elements. Then it is in particular true for $\mathcal{C'}:=\{\pi_1,\ldots,\pi_{n-1}\}$. Define $S':=\langle\pi_1,\ldots,\pi_{n-1}\rangle$ and $I':=\langle\pi_i\cap\pi_j \mid 1\leq i < j\leq n-1\rangle$. Then the induction hypothesis implies:
\begin{align}\label{ind2}
\dim S' + \dim I' \leq 2k+2(n-3)t-(n-4)
.\end{align}
Define $A:=\langle\pi_1\cap\pi_n,\ldots,\pi_{n-1}\cap\pi_n\rangle$ to be the space spanned by the intersections with the space $\pi_n$. Then $\dim A = k-t+\delta$, for a certain value $\delta \geq 0$. Note that $S=\langle S', \pi_n\rangle$ and that $A\subseteq \pi_n\cap S'$, thus we have:
\begin{align}\label{S5}
\dim S \leq \dim S' + \dim \pi_n - \dim A = \dim S' +t -\delta.
\end{align} 
We can now repeat the same argument as in the previous proof, to find that $\dim (A \cap I') \geq k-2t$. We distinguish between two cases:
\begin{itemize}
	\item Case 1: $\dim (A \cap I') = k-2t$. \par
	For any $i, j$, with $1\leq i < j \leq n-1$, we have $\langle\pi_i\cap\pi_n,\pi_j\cap\pi_n\rangle\subseteq\pi_n$, implying:
	\begin{align*}
	\dim \pi_n &\geq \dim (\pi_i\cap\pi_n) + \dim (\pi_j\cap\pi_n)-\dim (\pi_i\cap\pi_j\cap\pi_n) \\
	\Rightarrow k &\geq k-t+k-t-\dim (\pi_i\cap\pi_j\cap\pi_n).
	\end{align*}
	We find $\dim (\pi_i\cap\pi_j\cap\pi_n)\geq k-2t$. Since $\dim(A\cap I')=k-2t$ and $\pi_i\cap\pi_j\cap\pi_n\subseteq A \cap I'$, we have that $\pi_i\cap\pi_j\cap\pi_n=A\cap I'$. Since this argument is independent from the choices of $i$ and $j$, it follows that all intersections $\pi_i\cap\pi_j\cap\pi_n$ must coincide, for $1\leq i < j \leq n-1$. Hence, the intersections $\pi_i\cap\pi_n$ form a $(k-t,k-2t)$-sunflower inside $\pi_n$.\par
	Now let $X$ be a $t$-dimensional space skew to $\pi_n$, such that $\pi_1\cap\pi_2=\langle\pi_1\cap\pi_2\cap\pi_n,X\rangle$. Then, $\pi_1=\langle\pi_1\cap\pi_n,X\rangle\subseteq\langle\pi_n,X\rangle$ and $\pi_n\subseteq\langle\pi_n,X\rangle$. For any $i$, with $1<i<n$, we have $\pi_i\cap\pi_1\subseteq\pi_i$, $\pi_i\cap\pi_n\subseteq\pi_i$ and
	\begin{align*}
	\dim\langle\pi_i\cap\pi_1,\pi_i\cap\pi_n\rangle &= \dim(\pi_i\cap\pi_1)+\dim(\pi_i\cap\pi_n)-\dim(\pi_1\cap\pi_i\cap\pi_n) \\
	&=k-t+k-t-(k-2t) \\
	&=k.
	\end{align*}
	This implies that $\pi_i=\langle\pi_i\cap\pi_1,\pi_i\cap\pi_n\rangle\subseteq\langle\pi_1,\pi_n\rangle\subseteq\langle\pi_n,X\rangle$. Hence, $S \subseteq \langle\pi_n,X\rangle$ and $I \subseteq \langle\pi_n,X\rangle$. \par
	We have that $\dim\langle\pi_n,X\rangle=k+t$, which implies that $\dim I \leq k+t$ and $\dim S \leq k+t$, such that $\dim S + \dim I \leq 2(k+t)$. This bound is lower than the one stated in the theorem.
	\item Case 2: $\dim (A \cap I') > k-2t$. \par
	We have that $I=\langle I',A\rangle$, such that
\begin{align}\label{I5}
\dim I &= \dim I'+ \dim A - \dim (A \cap I') \nonumber \\
& < \dim I' + k-t+\delta-(k-2t) \nonumber \\
&<\dim I'+t+\delta.
\end{align}
Combining (\ref{S5}) and (\ref{I5}) with (\ref{ind2}) we get:
\begin{align*}
\dim S + \dim I &\leq \dim S' + t - \delta + \dim I'+t+\delta -1\\
& \leq 2k+2(n-3)t-(n-4) + 2t-1 \\
& \leq 2k+2(n-2)t-(n-3).
\end{align*}
\end{itemize}
\end{proof}

We conclude this section with a summary of the bounds for different values of $n$, $k$ and $t$, given by Table \ref{tabel}. \par
\begin{table}
\centering
\begin{tabular}{|c|c|c|}
			\hline
			Condition&Upper bound for $\dim S + \dim I$&Sharpness?\\
			\hline
			\hline
			$(k-t)(n-1) \leq k$ & $nk$ & yes \\
			\hline
			$k\geq 2t$, $n\geq 3$ & $2k+2(n-2)t-(n-3)$ & unknown \\
			\hline
			\begin{tabular}{@{}c@{}}$k<2t$ \\ $(k-t)(n-1) > k$\end{tabular} & $nk$ & no \\
			\hline
\end{tabular}
\caption{Summary of the best bounds found for $\dim S +\dim I$, for different values of $n$, $k$ and $t$.}
\label{tabel}
\end{table}

%\textbf{Summary}
%\begin{enumerate}
%\item If $(k-t)(n-1) \leq k$, then we have $\dim S + \dim I \leq nk$ and this bound is sharp.
%\item If $k>2t$, then we have $\dim S + \dim I \leq 2k+2(n-2)t-(n-3)$. We don't know whether this bound is sharp.
%\item If $k<2t$ and $(k-t)(n-1) > k$, then the best upper bound we found is $\dim S + \dim I <nk$. Note that this bound cannot be sharp.
%\end{enumerate}

\section{Spectrum results}\label{spec}
In this section we construct examples of SCIDs for several values of $\dim S + \dim I$. In the second subsection, we'll assume that the condition $(n-1)(k-t)\leq k$ holds, and adapt the construction from Theorem \ref{construction} to construct new SCIDs with $\dim S + \dim I = nk-\epsilon$. Then later in the third subsection, we will drop the assumption $(n-1)(k-t)\leq k$ and use field reduction to construct sunflowers, which are particular examples of SCIDs, for even smaller values of $\dim S + \dim I$. The concept of field reduction is explained in the first subsection. We finish this section with a summary of the spectrum results we obtained.\par
\subsection{Field reduction}
Field reduction is a powerful tool in finite geometry. The method is described for projective spaces in \cite{mbs} and for projective spaces and polar spaces in \cite{fieldred}. For our purposes however, we will consider field reduction in vector spaces. \par
The idea relies on the fact that $\mathbb{F}_{q^t}$ is a $t$-dimensional vector space over $\mathbb{F}_q$. Hence, all 1-dimensional subspaces of a vector space $V(n,q^t)$ correspond to $t$-dimensional subspaces of the vector space $V(nt,q)$. Moreover, the set of all 1-dimensional spaces of $V(n,q^t)$ corresponds to a $t$-spread in $V(nt,q)$. This spread is often called a \emph{Desarguesian spread}. \par
Note that a $d$-dimensional subspace in $V(n,q^t)$ corresponds to a $dt$-dimensional subspace in $V(nt,q)$. Hence, we have the following lemma:

\begin{lemma}\label{fred}
Using field reduction, a set of 1-dimensional subspaces in $V(n,q^t)$ spanning a $d$-dimensional space corresponds to a partial $t$-spread in $V(nt,q)$ spanning a $dt$-dimensional space.
\end{lemma}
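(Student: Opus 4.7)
The plan is to unpack the field-reduction correspondence carefully and verify the two assertions of the lemma separately: first that the image set is a partial $t$-spread, and second that its span has the claimed dimension $dt$.

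Let $\phi$ denote field reduction, viewed as the map that sends an $\mathbb{F}_{q^t}$-subspace of $V(n,q^t)$ to the same set of vectors regarded as an $\mathbb{F}_q$-subspace of $V(nt,q)$. Let $\mathcal{L}=\{\ell_1,\ldots,\ell_m\}$ be the given set of 1-dimensional $\mathbb{F}_{q^t}$-subspaces, and let $W:=\langle \ell_1,\ldots,\ell_m\rangle_{\mathbb{F}_{q^t}}$, which by hypothesis has $\mathbb{F}_{q^t}$-dimension $d$. By the basic dimension-multiplication property of field reduction recalled in the paragraph preceding the lemma, each $\phi(\ell_i)$ is a $t$-dimensional $\mathbb{F}_q$-subspace of $V(nt,q)$, and $\phi(W)$ is a $dt$-dimensional $\mathbb{F}_q$-subspace.

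For the partial-spread claim, I would observe that field reduction preserves intersections, since $\phi(U\cap U')$ consists of exactly the same vectors as $\phi(U)\cap\phi(U')$. Now for $i\neq j$, the subspaces $\ell_i$ and $\ell_j$ are distinct 1-dimensional $\mathbb{F}_{q^t}$-subspaces, so $\ell_i\cap\ell_j=\{0\}$. Hence $\phi(\ell_i)\cap\phi(\ell_j)=\{0\}$, and the family $\{\phi(\ell_1),\ldots,\phi(\ell_m)\}$ consists of pairwise trivially intersecting $t$-dimensional $\mathbb{F}_q$-subspaces, i.e. a partial $t$-spread in $V(nt,q)$.

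For the span claim, let $T:=\langle \phi(\ell_1),\ldots,\phi(\ell_m)\rangle_{\mathbb{F}_q}$. Since each $\phi(\ell_i)\subseteq \phi(W)$, we have $T\subseteq \phi(W)$. Conversely, any $w\in W$ can be written in the form $w=\sum_{i=1}^m c_i v_i$ with $v_i\in\ell_i$ and $c_i\in\mathbb{F}_{q^t}$; because each $\ell_i$ is an $\mathbb{F}_{q^t}$-subspace, $c_iv_i\in\ell_i=\phi(\ell_i)$ as a set, so $w$ is an $\mathbb{F}_q$-sum of elements of the $\phi(\ell_i)$, i.e.\ $w\in T$. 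Therefore $\phi(W)\subseteq T$, giving equality, and hence $\dim_{\mathbb{F}_q} T=\dim_{\mathbb{F}_q}\phi(W)=dt$.

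The only substantive point is the last span argument; everything else is immediate from the definition of field reduction. There is no genuine obstacle, but care is needed to distinguish between $\mathbb{F}_{q^t}$-span and $\mathbb{F}_q$-span and to note that the coefficients $c_i$ can be absorbed into the vectors $v_i$ precisely because $\ell_i$ is itself $\mathbb{F}_{q^t}$-stable.
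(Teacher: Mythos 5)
Your proof is correct and follows exactly the route the paper takes: the paper offers no separate argument for this lemma, treating it as an immediate consequence of the dimension-multiplication property of field reduction stated in the preceding paragraph, which is precisely what you verify. Your write-up simply makes explicit the two details the paper leaves implicit (that field reduction preserves trivial intersections, and that the $\mathbb{F}_q$-span of the reduced lines equals the reduction of the $\mathbb{F}_{q^t}$-span), and both verifications are sound.
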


\subsection{Spectrum result on SCIDs}
In the proof of Theorem \ref{construction}, we constructed a SCID such that both $\dim I$ and $\dim S$ were maximal. This way the sum $\dim S + \dim I$ was maximal as well. In this section we want to construct SCIDs with smaller values for this sum. In order to do this, we adapt the construction from Theorem \ref{construction} in such a way that $\dim I$ decreases, while $\dim S$ stays as large as possible.  \par
The idea behind the proof of Theorem \ref{spec1} is that instead of having all pairwise intersections span a space of maximal dimension, we now demand that there is some overlap between three spaces of the SCID. This causes $\dim I$ to decrease. On the other hand, we want $\dim S$ to be as large as possible under this requirement. So apart from the overlap, we need all spaces of the SCID to span maximal dimension. \par
\begin{theorem}\label{spec1}
If $(n-1)(k-t)\leq k$ and $0\leq \epsilon \leq k-t$, then there exists a $(k,k-t)$-SCID $\{\pi_1,\ldots,\pi_n\}$, such that 
\[
\dim S + \dim I =nk - \epsilon,
\] with $S:=\langle\pi_1,\ldots,\pi_n\rangle$ and $I:=\langle\pi_i\cap\pi_j \mid i\neq j\rangle$.
\end{theorem}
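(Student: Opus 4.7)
The plan is to adapt the extremal construction from Theorem \ref{construction}, which already realizes $\dim S + \dim I = nk$, by deliberately introducing an $\epsilon$-dimensional overlap among three of the pairwise intersections. For $\epsilon = 0$ Theorem \ref{construction} supplies the desired SCID, so I focus on $\epsilon \geq 1$; this forces $n \geq 3$, because for $n = 2$ the sum $\dim S + \dim I$ equals $2k$ unconditionally.

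In a sufficiently large ambient vector space, fix an $\epsilon$-dimensional subspace $W$ and designate the pairs $(1,2), (1,3), (2,3)$ as special. Choose $(k-t)$-dimensional spaces $V_{12}, V_{13}, V_{23}$, each containing $W$ and pairwise intersecting in exactly $W$. For every remaining pair, choose a $(k-t)$-dimensional $V_{ij}$ in general position with respect to all previously chosen subspaces. For $i \in \{1,2,3\}$, choose $U_i$ of dimension $k - (n-1)(k-t) + \epsilon$, and for $i \geq 4$ choose $U_i$ of dimension $k - (n-1)(k-t)$; all of the $U_i$'s are placed in general position. Set $\pi_i := \langle U_i, V_{ij} \mid j \neq i \rangle$.

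The verification is essentially three bookkeeping checks. First, a dimension count inside each $\pi_i$ yields $\dim \pi_i = k$: for $i \in \{1,2,3\}$, the $\epsilon$ dimensions lost from the overlap $V_{12} \cap V_{13} = W$ (and the analogous overlaps for $i = 2, 3$) are precisely compensated by the $\epsilon$ extra dimensions of the enlarged $U_i$, and for $i \geq 4$ no overlap is present so the dimension count from Theorem \ref{construction} carries over verbatim. Second, general position of the bases forces $\pi_i \cap \pi_j = V_{ij}$, so $\{\pi_1, \ldots, \pi_n\}$ is indeed a $(k, k-t)$-SCID. Third, a direct count of independent basis vectors gives $\dim I = \binom{n}{2}(k-t) - 2\epsilon$ (the factor $2\epsilon$ records that $W$ lies in each of $V_{12}, V_{13}, V_{23}$ but contributes only $\epsilon$ dimensions), and $\dim S = n\bigl(k - (n-1)(k-t)\bigr) + \binom{n}{2}(k-t) + \epsilon$, so $\dim S + \dim I = nk - \epsilon$.

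The main obstacle is giving a rigorous meaning to \emph{general position}. I would handle this by prescribing an explicit disjoint collection of basis vectors for each building block $U_i$ and $V_{ij} \setminus W$, with the $\epsilon$ vectors of $W$ as the only shared basis vectors and only among $V_{12}, V_{13}, V_{23}$. Every dimension above is then simply a count of distinct basis vectors, and each intersection identity follows directly from the basis. The hypothesis $\epsilon \leq k-t$ is exactly what allows $W$ to sit inside each of the three special $V_{ij}$'s, while $(n-1)(k-t) \leq k$ keeps both the baseline and the enlarged $U_i$ dimensions non-negative.
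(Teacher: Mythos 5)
Your construction is correct and is essentially the paper's own argument: both introduce an $\epsilon$-dimensional space ($W$, the paper's $E$) common to three of the pairwise intersections, which lowers $\dim I$ by $2\epsilon$ while the compensating $\epsilon$ extra dimensions per affected space (your enlarged $U_i$'s, the paper's $P_i$'s) raise $\dim S$ by only $\epsilon$, and both justify the dimension counts by the same disjoint-basis/general-position bookkeeping. The only differences are cosmetic --- you build the SCID directly from the blocks $U_i$, $V_{ij}$ rather than perturbing the extremal example of Theorem~\ref{construction}, and you use the triple $(1,2),(1,3),(2,3)$ instead of $(1,2),(2,n),(1,n)$ --- and you share with the paper the implicit need for $n\geq 3$ when $\epsilon>0$.
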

\begin{proof}
If $\epsilon=0$, then a construction is given by Theorem \ref{construction}. \par
For $\epsilon>0$, consider a $(k,k-t)$-SCID $\mathcal{C}=\{\pi_1,\ldots,\pi_n\}$ constructed as in the proof of Theorem \ref{construction}. We will slightly adapt this SCID in order to construct a new SCID $\mathcal{C'}=\{\pi'_1,\ldots,\pi'_n\}$. In this new SCID, there will be an $\epsilon$-dimensional overlap between the three spaces $\pi'_1$, $\pi'_2$ and $\pi'_n$. \par
Consider an $\epsilon$-dimensional subspace $E \subseteq \pi_1\cap\pi_n$ and two $(k-t-\epsilon)$-dimensional subspaces $D_1 \subseteq \pi_1\cap\pi_2$ and $D_2 \subseteq \pi_2\cap\pi_n$. From the conditions in Theorem \ref{construction}, it follows that $\langle E,D_1,D_2\rangle$ has maximal dimension. We'll choose our spaces from $\mathcal{C'}$ such that the space $E$ is the overlap between $\pi'_1$, $\pi'_2$ and $\pi'_n$, and such that $\pi'_1\cap\pi'_2=\langle E, D_1\rangle$ and $\pi'_2\cap\pi'_n=\langle E, D_2\rangle$. \par
By shifting the intersections like this, we loose $\epsilon$ dimensions from each of the three spaces $\pi_1$, $\pi_2$ and $\pi_n$. To compensate for this loss, choose $\epsilon$-dimensional spaces $P_1$, $P_2$ and $P_n$, such that these spaces span maximal dimension together with the elements of $\mathcal{C}$, i.e., $\dim \langle P_1,P_2,P_n, \pi_i \mid 1 \leq i \leq n \rangle$ is maximal. \par
Remember that by Theorem \ref{construction}, there exist $(k-(n-1)(k-t))$-dimensional spaces $U_1,\ldots,U_n$, such that for each $1\leq i\leq n$, the space $U_i$ is skew to $\langle\pi_1,\ldots,\pi_{i-1},\pi_{i+1},\ldots,\pi_n\rangle$ and such that $\pi_i=\langle U_i, \pi_i\cap\pi_j \mid i\neq j \rangle $. Now we have all components to define the spaces of $\mathcal{C}'$:
\begin{align*}
\pi'_1 &:= \langle D_1,P_1,U_1,\pi_1\cap\pi_j \mid 3\leq j \leq n\rangle \\
\pi'_2 &:= \langle E,D_1,D_2,P_2,U_2,\pi_2\cap\pi_j \mid 3\leq j < n\rangle \\
\pi'_j&:=\pi_j \text{, for } 3\leq j < n \\
\pi'_n &:= \langle D_2,P_n,U_n,\pi_1\cap\pi_n,\pi_j\cap\pi_n \mid 3\leq j < n\rangle.
\end{align*}
Now we first want to show that this defines in fact a $(k,k-t)$-SCID. For the dimensions of the spaces $\pi'_1,\ldots,\pi'_n$, note that we can just sum up the dimensions of the spaces in their definitions above. This follows directly from the third condition in Theorem \ref{construction}. We now find: 
\begin{align*}
\dim \pi'_1 &= (k-t-\epsilon) + \epsilon + (k-(n-1)(k-t)) + (n-2)(k-t)=k \\
\dim \pi'_2 &= \epsilon + 2(k-t-\epsilon) + \epsilon + (k-(n-1)(k-t)) + (n-3)(k-t) = k \\
\dim \pi'_j&=\dim \pi_j = k \text{, for } 3\leq j < n \\
\dim \pi'_n&=(k-t-\epsilon) + \epsilon + (k-(n-1)(k-t)) + (k-t)+(n-3)(k-t)=k.
\end{align*}
Moreover, for the intersections we have that $\pi'_i\cap\pi'_j=\pi_i\cap\pi_j$, for $1\leq i<j\leq n$, except for $\pi'_1\cap\pi'_2$ and $\pi'_2\cap\pi'_n$. For these intersections we have that $\pi'_1\cap\pi'_2=\langle D_1,E\rangle$ and $\pi'_2\cap\pi'_n=\langle D_2,E\rangle$. Note that $E\subseteq \pi_1\cap \pi_n$. Hence all pairwise intersections of the spaces $\pi'_1,\ldots,\pi'_n$ have dimension $k-t$. This implies that $\mathcal{C'}=\{\pi'_1,\ldots,\pi'_n\}$ is indeed a $(k,k-t)$-SCID. \par
Now define $S:=\langle\pi'_1,\ldots,\pi'_n\rangle$ and $I:=\langle\pi'_i\cap\pi'_j \mid 1\leq i<j\leq n\rangle$. Note that, since $E\subseteq\pi'_1\cap\pi'_n$:
\[
I=\langle D_1,D_2,\pi'_i\cap\pi'_j\rangle
,\]
where $(i,j)\in\{1,\ldots,n\}^2\setminus \{(1,2),(2,n)\}$. By the definitions above and the third condition of Theorem \ref{construction}, we can sum up the dimensions of these spaces to find the dimension of $I$: 
\begin{align}\label{I6}
\dim I &= (k-t-\epsilon) + (k-t-\epsilon) + \left(\frac{n(n-1)}{2}-2\right)(k-t)  \nonumber \\
& = \frac{n(n-1)}{2}(k-t)-2\epsilon.
\end{align}
On the other hand, note that:
\[
S  = \langle D_1, D_2, P_1, P_2, P_n, U_l, \pi'_i\cap\pi'_j\rangle,
\]
where $1\leq l \leq n$ and $(i,j)\in\{1,\ldots,n\}^2\setminus \{(1,2),(2,n)\}$. Note again that $E\subseteq \pi'_1\cap\pi'_n$. Again by the third condition of Theorem \ref{construction} 	and by the way we defined these spaces, we can find the dimension of $S$ by summing the dimensions:
\begin{align}\label{S6}
\dim S &= 2(k-t-\epsilon) + 3\epsilon + n(k-(n-1)(k-t)) + \left(\frac{n(n-1)}{2}-2\right)(k-t) \nonumber\\
&=\frac{n(n-1)}{2}(k-t)+\epsilon+ n(k-(n-1)(k-t))
.\end{align}
Combining (\ref{I6}) and (\ref{S6}), we get:
\[\dim S + \dim I =nk-\epsilon.\]
%\begin{align*}
%\dim S + \dim I =& \frac{n(n-1)}{2}(k-t)+\epsilon+ n(k-(n-1)(k-t)) \\ &+ \frac{n(n-1)}{2}(k-t)-2\epsilon \\
%=& nk-\epsilon.
%\end{align*}
We can conclude that $\mathcal{C'}$ is a $(k,k-t)$-SCID fulfilling the condition of the theorem.
\end{proof}

The idea of this proof can be generalized to construct SCIDs with lower values of the sum $\dim S + \dim I$. In the first part of the proof of Theorem \ref{spec2}, we let intersections \emph{coincide} instead of just having some overlap. This again causes $\dim I$ to decrease. Meanwhile we keep $\dim S$ as large as possible by choosing spaces that span maximal dimension. \par
In the second part of the proof, we adapt the construction from the first part by using a similar technique to Theorem \ref{spec1}. \par
\begin{theorem}\label{spec2}
If $(n-1)(k-t)\leq k$, $0\leq \epsilon \leq k-t$ and $2\leq\eta \leq n-1$, then there exists a $(k,k-t)$-SCID $\{\pi_1,\ldots,\pi_n\}$ such that:
\[
\dim S + \dim I =nk -(\eta-2)(k-t)- \epsilon
,\]
with $S:=\langle\pi_1,\ldots,\pi_n\rangle$ and $I:=\langle\pi_i\cap\pi_j \mid i\neq j\rangle$.
\end{theorem}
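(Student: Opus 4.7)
The plan is to construct a $(k,k-t)$-SCID achieving the claimed value of $\dim S+\dim I$ by extending the maximal construction of Theorem \ref{construction}, in two steps corresponding to the two-part strategy described in the passage before the statement.

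For Part~1 (the case $\epsilon=0$), I would start from a maximal-dimension SCID as in Theorem \ref{construction}, with its $V_{ij}$'s and $U_i$'s, and force $V_{12}=V_{13}=\cdots=V_{1\eta}=:W$. Since each $\pi_i$ with $i\leq\eta$ then contains $W$, while every pairwise intersection is still $(k-t)$-dimensional, this forces $\pi_i\cap\pi_j=W$ for all $1\leq i<j\leq\eta$, so that $\pi_1,\ldots,\pi_\eta$ form a sunflower with centre $W$. To keep $\dim\pi_i=k$, replace the original $U_i$ (for $i\leq\eta$) by a $(k-(n-\eta+1)(k-t))$-dimensional space, which is non-negative by the hypothesis $(n-1)(k-t)\leq k$ combined with $\eta\geq 2$. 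Summing dimensions in general position yields $\dim I=[\binom{n}{2}-\binom{\eta}{2}+1](k-t)$, and a short computation gives $\dim S+\dim I=nk-(\eta-2)(k-t)$, settling the case $\epsilon=0$.

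For Part~2 (the case $\epsilon>0$), I would further modify the SCID from Part~1 by introducing a partial overlap between a non-sunflower space and the centre $W$, in the spirit of Theorem \ref{spec1}. Since $\eta\leq n-1$, a non-sunflower space $\pi_{\eta+1}$ is available. Fix an $\epsilon$-dimensional subspace $E\subseteq W$ and arrange $\pi_{\eta+1}$ so that $\pi_{\eta+1}\cap W=E$; this automatically places $E$ inside each $\pi_{\eta+1}\cap\pi_i$ for $i\leq\eta$, so these $\eta$ intersections all contain $E$ while remaining otherwise generic. To maintain $\dim\pi_i=k$, enlarge each $U_i$ with $i\leq\eta$ by $\epsilon$ dimensions and $U_{\eta+1}$ by $(\eta-1)\epsilon$ dimensions; these enlargements are admissible since $\epsilon\leq k-t$ and the original $U_i$'s were already non-negative by the hypothesis. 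A direct dimension count then shows that $\dim I$ drops by exactly $\eta\epsilon$ (the $\eta$ spaces $\pi_{\eta+1}\cap\pi_i$ now sharing $E$ instead of being independent outside $W$) while $\dim S$ rises by $(\eta-1)\epsilon$ from the enlarged $U_i$'s, so $\dim S+\dim I$ decreases by exactly $\epsilon$, giving the target value.

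The main obstacle will be to check rigorously that all the ingredients can be placed in sufficiently general position, so that the only coincidences inside $I$ and $S$ are the intended ones (the sunflower centre $W$, the $\epsilon$-dimensional overlap $E$, and nothing more). Concretely, one must verify that the sums of the form $\langle U_i, V_{lj}, E\rangle$ meet only in the subspaces they are forced to share; this can be done by an explicit coordinate construction in a large enough ambient $\mathbb{F}_q$-vector space, invoking the maximality (third) condition of Theorem \ref{construction} applied to the leftover ingredients. Once this general-position claim is secured, both dimension counts reduce to routine inclusion-exclusion, and concatenating Parts~1 and~2 produces a $(k,k-t)$-SCID with $\dim S+\dim I=nk-(\eta-2)(k-t)-\epsilon$.
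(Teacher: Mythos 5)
Your proposal is correct and follows essentially the same two-stage strategy as the paper: first collapse the pairwise intersections of $\eta$ of the spaces onto a common $(k-t)$-dimensional centre (compensating the lost dimensions with generic spaces, which you merge into enlarged $U_i$'s where the paper keeps separate spaces $P_i$), then introduce an $\epsilon$-dimensional overlap $E$ between that centre and one further space, with the same bookkeeping ($\dim I$ drops by $\eta\epsilon$, $\dim S$ rises by $(\eta-1)\epsilon$). Apart from an indexing convention (your sunflower group is $\{1,\ldots,\eta\}$ with pivot $\pi_{\eta+1}$ versus the paper's $\{1,\ldots,\eta-1,n\}$ with pivot $\pi_\eta$), the dimension counts agree with the paper's, and the general-position verification you flag is handled there in exactly the way you propose, via the maximality condition of Theorem \ref{construction}.
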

\begin{proof}
For $\eta=2$, this is exactly Theorem \ref{spec1}. So assume $\eta>2$. We distinguish between two cases, based on the value of $\epsilon$. \par
\textbf{Case 1: $\epsilon=0$} \newline
Let $\mathcal{C}$ be a $(k,k-t)$-SCID as constructed in Theorem \ref{construction}. Just like in the previous proof, we will make adaptations to $\mathcal{C}$ in order to obtain a new $(k,k-t)$-SCID $\mathcal{C}'=\{\pi'_1,\ldots,\pi'_n\}$ fulfilling the conditions of the theorem. For this new SCID, there will be a $(k-t)$-dimensional subspace that all spaces $\pi'_1,\ldots,\pi'_{\eta-1}$ and $\pi'_n$ have in common.\par
Now define $D:=\pi_1\cap\pi_n$, this will be the common subspace. But if we shift the elements of the SCID in such a way that all spaces $\pi'_1,\ldots,\pi'_{\eta-1}$ and $\pi'_n$ have $D$ in common, then we loose $(k-t)(\eta-2)$ dimensions in each of these spaces. To compensate this, choose $(k-t)(\eta-2)$-dimensional spaces $P_1, \ldots, P_{\eta-1}$ and $P_n$, such that these spaces span maximal dimension with the elements of $\mathcal{C}$, i.e., $\dim\langle P_1,\ldots,P_{\eta-1},P_n,\pi_i\mid 1\leq i\leq n \rangle$ is maximal. \par
Note that, by Theorem \ref{construction}, there exist $(k-(n-1)(k-t))$-dimensional spaces $U_1,\ldots,U_n$, such that for each $1\leq i\leq n$, the space $U_i$ is skew to $\langle\pi_1,\ldots,\pi_{i-1},\pi_{i+1},\ldots,\pi_n\rangle$ and such that $\pi_i=\langle U_i, \pi_i\cap\pi_j \mid i\neq j \rangle $. Now we have all components to define the spaces of $\mathcal{C'}$: \par
\begin{align*}
\pi'_i&=\langle D,U_i,P_i,\pi_i\cap\pi_j\mid \eta\leq j < n\rangle \text{, for } 1\leq i < \eta \\
\pi'_j&=\pi_j \text{, for } \eta \leq j < n \\
\pi'_n&=\langle D,U_n,P_n,\pi_n\cap\pi_j\mid \eta\leq j < n\rangle
\end{align*}
We now want to show that this indeed defines a $(k,k-t)$-SCID. To find the dimensions of the spaces $\pi'_1,\ldots,\pi'_n$, note that we can simply sum up the dimensions of the spaces in their definitions above. This follows from the third condition in Theorem \ref{construction}. We now find:
\begin{align*}
\dim\pi'_i&=(k-t)+(k-(n-1)(k-t))+(k-t)(\eta-2)+(n-\eta)(k-t)\\ &=k \text{, for } 1\leq i < \eta \\
\dim\pi'_j&=\dim\pi_j=k\text{, for } \eta\leq j < n \\
\dim\pi'_n&=(k-t)+(k-(n-1)(k-t))+(k-t)(\eta-2)+(n-\eta)(k-t)\\ &=k.
\end{align*}
For the pairwise intersections, we have $\pi'_i\cap\pi'_j=\pi_i\cap\pi_j$, if $1\leq i \leq n$ and $\eta\leq j < n$. All other pairwise intersections are equal to the space $D$. We can conclude that all the pairwise intersections of the spaces $\{\pi'_1,\ldots,\pi'_n\}$ have dimension $k-t$, implying $\mathcal{C'}:=\{\pi'_1,\ldots,\pi'_n\}$ is a $(k,k-t)$-SCID.\par
Now define $S:=\langle\pi'_1,\ldots,\pi'_n\rangle$ and $I:=\langle\pi'_i\cap\pi'_j \mid 1\leq i < j\leq n\rangle$. Note that 
\[
I = \langle D, \pi'_i\cap\pi'_j\mid 1\leq i \leq n \text{, } \eta\leq j < n \text{ and } i\neq j\rangle,
\]
where we can add the dimensions of the intersections $\pi'_i\cap\pi'_j$ and $D$, to find $\dim I$. Note that the number of intersections $\pi'_i\cap\pi'_j$ occurring in this expression is
\[
\eta(n-\eta)+\frac{(n-\eta)(n-\eta-1)}{2}.
\]
From this follows the dimension of $I$:
\begin{align}\label{I7}
\dim I &= (k-t)+\left(\eta(n-\eta)+\frac{(n-\eta)(n-\eta-1)}{2}\right)(k-t),
\end{align}
where the first value $k-t$ comes from $\dim D$.
On the other hand, for $S$ we have:
\begin{align*}
S&=\langle D, U_1,\ldots,U_n,P_1,\ldots,P_{\eta-1},P_n,\pi'_i\cap\pi'_j\mid 1\leq i \leq n \text{, } \eta\leq j < n \text{ and } i\neq j\rangle
.\end{align*}
By construction, we can find the dimension of $S$ by summing up all dimensions of the spaces occurring in the expression above. Hence, we find:
\begin{align}\label{S7}
\dim S =& (k-t)+n(k-(n-1)(k-t))+\eta(k-t)(\eta-2) \nonumber \\
& +\left(\eta(n-\eta)+\frac{(n-\eta)(n-\eta-1)}{2}\right)(k-t) 
.\end{align}
Note that $2\eta(n-\eta)+(n-\eta)(n-\eta-1)=n(n-1)-\eta(\eta-1)$.
%\begin{align*}
%2\eta(n-\eta)&+(n-\eta)(n-\eta-1) \\
%&= 2n\eta-2\eta^2+n(n-1)-n\eta+\eta(\eta+1)-n\eta \\
%&=n(n-1)-\eta(\eta-1).
%\end{align*}
Combining this with (\ref{I7}) and (\ref{S7}), we find:
\[\dim S+\dim I = nk-(\eta-2)(k-t).\]
%\begin{align*}
%\dim S &+ \dim I \\
%=& 2(k-t)+n(k-(n-1)(k-t))+\eta(k-t)(\eta-2)  \\
%& +2\left(\eta(n-\eta)+\frac{(n-\eta)(n-\eta-1)}{2}\right)(k-t)  \\
%=& nk + (k-t)\left[2-n(n-1)+\eta(\eta-2)+n(n-1)-\eta(\eta-1)\right] \\
%=&nk-(\eta-2)(k-t)
%\end{align*}
Hence, $\mathcal{C'}$ is a $(k,k-t)$-SCID fulfilling the condition of the theorem. \par
\textbf{Case 2: $\epsilon>0$} \newline
 Let $\mathcal{C'}=\{\pi'_1,\ldots,\pi'_n\}$ be a $(k,k-t)$-SCID as constructed in the previous case for $\epsilon=0$. Let the spaces $D$, $P_i$ and $U_j$, for $i \in \{1,\ldots,\eta-1,n\}$ and $j \in \{1,\ldots,n\}$, be as defined in the first case. We will again adapt $\mathcal{C'}$ to construct a $(k,k-t)$-SCID $\mathcal{C''}=\{\pi''_1,\ldots,\pi''_n\}$ meeting the desired conditions. For this, we generalize the technique used in the proof of Theorem \ref{spec1}. \par
Let $E \subseteq D=\pi'_1\cap\pi'_n$ be an $\epsilon$-dimensional subspace, this will be the overlap between the spaces $\pi''_1,\ldots,\pi''_\eta$ and $\pi''_n$. Next, consider $(k-t-\epsilon)$-dimensional subspaces $D_1\subseteq \pi'_1\cap\pi'_\eta, \ldots, D_{\eta-1} \subseteq \pi'_{\eta-1}\cap\pi'_\eta$ and $D_n\subseteq\pi'_\eta\cap\pi'_n$. We will choose the elements of $\mathcal{C''}$ in such a way that $\pi''_i\cap\pi''_\eta=\langle D_i,E\rangle$, for $i\in\{1,\ldots,\eta-1,n\}$. \par
Shifting the spaces like this again causes a loss of dimensions. Note that we loose $(\eta-1)\epsilon$ dimensions from $\pi'_\eta$. To compensate, choose an $(\eta-1)\epsilon$-dimensional space $Y$, such that $Y$ spans maximal dimension together with the spaces of $\mathcal{C}$, i.e., $\dim \langle Y, \pi_i \mid 1\leq i \leq n\rangle$ is maximal. \par
From each of the first $\eta-1$ spaces and the $n$th space we loose $\epsilon$ dimensions. To compensate, choose $\epsilon$-dimensional spaces $X_1,\ldots,X_{\eta-1}$ and $X_n$, such that these spaces span maximal dimension together with $Y$ and the spaces of $\mathcal{C'}$. I.e. $\dim \langle X_1,\ldots, X_{\eta-1},X_n,Y,\pi'_i \mid 1\leq i\leq n\rangle$ is maximal. \par
Now we have everything we need to define $\mathcal{C''}=\{\pi''_1,\ldots,\pi''_n\}$:
\begin{align*}
\pi''_i&:=\langle D, D_i,U_i, P_i, X_i, \pi'_i\cap\pi'_j\mid \eta<j<n \rangle\text{, for } 1\leq i < \eta \\
\pi''_\eta&:=\langle E,D_1,\ldots,D_{\eta-1},D_n,U_\eta,Y,\pi'_\eta\cap\pi'_j\mid \eta<j<n\rangle \\
\pi''_j&:=\pi'_j\text{, for } \eta<j<n \\
\pi''_n&:=\langle D,D_n,U_n,P_n,X_n,\pi'_j\cap\pi'_n\mid \eta<j<n\rangle.
\end{align*}
By the way we defined the spaces in the definitions above, we can just sum up the dimensions of the subspaces to find the dimensions of the spaces $\pi''_i$:
\begin{align*}
\dim\pi''_i=& (k-t)+(k-t-\epsilon)+(k-(n-1)(k-t))+(\eta-2)(k-t) \\
&+\epsilon+(n-\eta-1)(k-t)\\
=&k\text{, for } 1\leq i <\eta \\
\dim\pi''_\eta=&\epsilon+\eta(k-t-\epsilon)+(k-(n-1)(k-t))+(\eta-1)\epsilon+(n-\eta-1)(k-t) \\
=&k \\
\dim\pi''_j=&\dim\pi'_j=k\text{, for }\eta<j<n \\
\dim\pi''_n=&(k-t)+(k-t-\epsilon)+(k-(n-1)(k-t))+(\eta-2)(k-t) \\
&+\epsilon+(n-\eta-1)(k-t) \\
=&k
\end{align*}
For the pairwise intersections we have $\pi''_i\cap\pi''_j=\pi'_i\cap\pi'_j$, as long as $i$ and $j$ are different from $\eta$. For $\eta<i<n$, we have $\pi''_\eta\cap\pi''_i=\pi'_\eta\cap\pi'_i$. For $1\leq i <\eta$, $\pi''_i\cap\pi''_\eta=\langle E,D_i \rangle$ and similarly we have $\pi''_\eta\cap\pi''_n=\langle E,D_n\rangle$. We can conclude that all the pairwise intersections of the spaces $\{\pi''_1,\ldots,\pi''_n\}$ have dimension $k-t$, implying $\mathcal{C''}:=\{\pi''_1,\ldots,\pi''_n\}$ is indeed a $(k,k-t)$-SCID.\par
Now define $S:=\langle\pi''_1,\ldots,\pi''_n\rangle$ and $I:=\langle\pi''_i\cap\pi''_j\mid 1\leq i < j\leq n\rangle$. Note that, since $E\subseteq D$: 
\[
I = \langle D,D_1,\ldots,D_{\eta-1},D_n,\pi''_i\cap\pi''_j\rangle
,\]
where $1\leq i \leq n$, $\eta<j<n$ and $i\neq j$. Note that the number of different intersections $\pi''_i\cap\pi''_j$ occurring in the expression above is:
\[
(\eta+1)(n-\eta-1)+\frac{(n-\eta-1)(n-\eta-2)}{2}
.\]
By construction, we can again sum the dimensions of the spaces occurring in the expression above to find the dimension of $I$:
\begin{align}\label{I8}
\dim I =& (k-t)+\eta(k-t-\epsilon) \nonumber \\
&+\left((\eta+1)(n-\eta-1)+\frac{(n-\eta-1)(n-\eta-2)}{2}\right)(k-t).
\end{align}
On the other hand, note for $S$ that:
\begin{align*}
S = \langle D,D_1,\ldots,D_{\eta-1},D_n,X_1,\ldots,X_{\eta-1},X_n,Y,P_1,\ldots,P_{\eta-1},P_n,U_1,\ldots,U_n,\pi''_i\cap\pi''_j\rangle
,\end{align*}
where $1\leq i \leq n$, $\eta<j<n$ and $i\neq j$. Again, the construction allows us to calculate the sum of the dimensions of all the spaces in the expression above to find the dimension of $S$. We now have:
\begin{align}\label{S8}
\dim S =& (k-t) + \eta(k-t-\epsilon)+\eta\epsilon+(\eta-1)\epsilon \nonumber \\
&+\eta(\eta-2)(k-t)+n(k-(n-1)(k-t)) \nonumber \\
&+ \left((\eta+1)(n-\eta-1)+\frac{(n-\eta-1)(n-\eta-2)}{2}\right)(k-t).
\end{align} 
Combining (\ref{I8}) and (\ref{S8}) and noting that
\[2(\eta+1)(n-\eta-1)+(n-\eta-1)(n-\eta-2)=n(n-1)-\eta(\eta+1),\]
%\begin{align*}
%2(\eta+1)(n-\eta-1)&+(n-\eta-1)(n-\eta-2) \\
%&= (n-\eta-1)(n+\eta) \\
%&=n(n-1)-\eta(\eta+1)
%,\end{align*}
we find:
\[\dim S + \dim I = nk-(\eta-2)(k-t)-\epsilon. \]
%\begin{align*}
%\dim S &+ \dim I \\
%=& 2\left[(k-t) + \eta(k-t-\epsilon)\right]+\eta\epsilon+(\eta-1)\epsilon  \\
%&+\eta(\eta-2)(k-t)+n(k-(n-1)(k-t))  \\
%&+ 2\left((\eta+1)(n-\eta-1)+\frac{(n-\eta-1)(n-\eta-2)}{2}\right)(k-t) \\
%=&nk+\epsilon\left[-2\eta+\eta+(\eta-1)\right] \\
%&+ (k-t)\left[2(1+\eta)+\eta(\eta-2)-n(n-1)+n(n-1)-\eta(\eta+1) \right] \\
%=&nk-\epsilon+(k-t)\left[(\eta+1)(2-\eta)-\eta(2-\eta) \right] \\
%=&nk-(\eta-2)(k-t)-\epsilon.
%\end{align*}
This shows that $\mathcal{C''}$ meets the desired conditions, finishing the proof.
\end{proof}

As long as the condition $(n-1)(k-t)\leq k$ holds, we have established examples of $(k,k-t)$-SCIDs with $\dim S + \dim I = N$, for any integer $N \in[nk-(n-2)(k-t),nk]$.\par

\subsection{Spectrum result on sunflowers}

What we actually did in the constructions of the previous section, was making $\dim I$ smaller while keeping $\dim S$ as large as possible. This method eventually gives rise to a maximal $(k,k-t)$-sunflower, for the case $\dim S + \dim I = nk-(n-2)(k-t)=2k+(n-2)t$.\par
Note that for any sunflower we have $\dim I = k-t$, which is the smallest possible dimension for $I$. To construct SCIDs with $\dim S + \dim I \leq 2k+(n-2)t$, it is not possible to further reduce $\dim I$. But since we're dealing with a \emph{maximal} sunflower, we can reduce $\dim S$. In that case we still have a sunflower, which is an example of a SCID.\par
From now on, we drop the condition $(n-1)(k-t)\leq k$. The essence of this section lies in field reduction and the following lemma:

\begin{lemma}\label{quot}
The existence of a $(k,k-t)$-sunflower spanning dimension $d$ is equivalent to the existence of a partial $t$-spread in a $(d-k+t)$-dimensional space, spanning that $(d-k+t)$-dimensional space.
\end{lemma}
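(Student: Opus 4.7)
The plan is to prove the two directions by passing to the quotient by the center in one direction, and by taking a direct sum with a skew complement in the other. The key observation is that a $(k,k-t)$-sunflower is essentially a ``thickening'' of a partial $t$-spread by a $(k-t)$-dimensional common center, and this correspondence is reversible.

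For the forward direction, I would start with a $(k,k-t)$-sunflower $\{\pi_1,\ldots,\pi_n\}$ with center $C$ (so $\dim C = k-t$ and $\pi_i \cap \pi_j = C$ for all $i\neq j$), spanning a space $S$ of dimension $d$. Since $C \subseteq \pi_i \subseteq S$ for every $i$, one can quotient by $C$ inside $S$. Each image $\overline{\pi_i} := \pi_i / C$ is a $t$-dimensional subspace of $S/C$, and for $i\neq j$ we have $\overline{\pi_i} \cap \overline{\pi_j} = (\pi_i \cap \pi_j)/C = C/C = \{0\}$, so $\{\overline{\pi_1},\ldots,\overline{\pi_n}\}$ is a partial $t$-spread. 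The span of these images is all of $S/C$, which has dimension $d - (k-t) = d - k + t$, as required.

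For the reverse direction, I would take a partial $t$-spread $\{T_1,\ldots,T_n\}$ spanning a $(d-k+t)$-dimensional space $W$, and choose a $(k-t)$-dimensional subspace $C$ skew to $W$ in some larger ambient space (which always exists by embedding $W$ as a subspace of a vector space of sufficiently large dimension). Define $\pi_i := \langle C, T_i \rangle$. Since $C \cap W = \{0\}$ and $T_i \subseteq W$, the sum is direct, so $\dim \pi_i = (k-t) + t = k$. For $i \neq j$, clearly $C \subseteq \pi_i \cap \pi_j$; conversely, any element of $\pi_i \cap \pi_j$ can be written as $c_i + t_i = c_j + t_j$ with $c_i,c_j \in C$ and $t_i \in T_i$, $t_j \in T_j$, giving $t_i - t_j = c_j - c_i \in C \cap W = \{0\}$, so $t_i = t_j \in T_i \cap T_j = \{0\}$, hence the intersection is exactly $C$. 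Thus $\{\pi_1,\ldots,\pi_n\}$ is a $(k,k-t)$-sunflower, and its span is $\langle C, W\rangle$ of dimension $(k-t)+(d-k+t) = d$.

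Neither direction presents a serious obstacle; both are routine linear algebra once the right reformulation (quotient / direct sum with skew complement) is chosen. The only minor point requiring care is verifying in the reverse direction that the pairwise intersections are \emph{exactly} $C$ and not something larger, which follows from the skewness of $C$ and $W$ combined with the partial spread property $T_i \cap T_j = \{0\}$.
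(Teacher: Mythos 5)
Your proof is correct and the forward direction is exactly the paper's argument: quotient by the center $C$ to obtain a partial $t$-spread spanning $S/C$ of dimension $d-k+t$. Your reverse direction (lifting the spread by taking spans with a $(k-t)$-space skew to $W$ and verifying the pairwise intersections are exactly $C$) is also correct and is in fact more explicit than the paper, which only writes out the quotient direction and leaves the converse implicit.
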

\begin{proof}
Let $S$ be the space spanned by the elements of the sunflower and let $C$ be its center. Then $\dim S=d$, $\dim C=k-t$, and all elements of the sunflower have dimension $k$. \par
Now consider the quotient space $S/C$. The elements of the sunflower all contain the center $C$, so in the quotient space they have dimension $k-(k-t)=t$. Since all elements of the sunflower have precisely $C$ as their pairwise intersections, their quotient equivalents must intersect trivially. So they must form a partial $t$-spread in $S/C$. \par
Moreover, since the elements of the sunflower span the space $S$, their quotient equivalents must span $S/C$, which has dimension $d-(k-t)=d-k+t$.
\end{proof}

Remember that we are working in a vector space $\mathcal{V}$ over the field $\mathbb{F}_q$, otherwise we cannot apply field reduction.

\begin{theorem}\label{sun}
If $1\leq\eta\leq n-2$ and $n\leq\frac{q^{t(n-\eta)}-1}{q^t-1}$, then there exists a $(k,k-t)$-SCID $\{\pi_1,\ldots,\pi_n\}$ such that:
\[
\dim S + \dim I =2k +(n-2)t-\eta t
,\]
with $S:=\langle\pi_1,\ldots,\pi_n\rangle$ and $I:=\langle\pi_i\cap\pi_j \mid i\neq j\rangle$.
\end{theorem}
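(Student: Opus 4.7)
The plan is to realize the required $(k,k-t)$-SCID as a sunflower, so that $\dim I = k-t$ is forced. Under this targeting, the identity $\dim S + \dim I = 2k + (n-2)t - \eta t$ becomes the single requirement $\dim S = k + (n-\eta-1)t$. So the task reduces to: construct an $n$-element $(k,k-t)$-sunflower whose span has dimension exactly $d := k + (n-\eta-1)t$.

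By Lemma \ref{quot}, such a sunflower exists in an ambient vector space over $\mathbb{F}_q$ if and only if there is a partial $t$-spread with $n$ elements spanning a vector space of dimension $d - k + t = (n-\eta)t$, equivalently, a partial $t$-spread in $V((n-\eta)t, q)$ consisting of $n$ blocks whose total span is the full ambient space. I then invert Lemma \ref{fred}: such a partial $t$-spread exists precisely when one can exhibit $n$ distinct $1$-dimensional subspaces of $V(n-\eta, q^t)$ whose span is all of $V(n-\eta, q^t)$. Note the hypothesis $\eta \leq n-2$ ensures $n-\eta \geq 2$, so that $V(n-\eta, q^t)$ is a nontrivial space in which to work.

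To produce the required $n$ one-dimensional subspaces, I would first fix a basis $e_1, \ldots, e_{n-\eta}$ of $V(n-\eta, q^t)$ and take the $1$-dimensional subspaces $\langle e_1\rangle, \ldots, \langle e_{n-\eta}\rangle$; these already span. Then I would adjoin $\eta$ further $1$-dimensional subspaces, chosen distinct from each other and from the first $n-\eta$. Since $V(n-\eta, q^t)$ has exactly $\frac{q^{t(n-\eta)}-1}{q^t - 1}$ one-dimensional subspaces in total, the hypothesis $n \leq \frac{q^{t(n-\eta)}-1}{q^t-1}$ is exactly what is needed to guarantee that such $\eta$ additional points are available. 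Reversing through Lemma \ref{fred} and Lemma \ref{quot} then produces a $(k,k-t)$-sunflower on $n$ elements spanning dimension $d = k + (n-\eta-1)t$, and a direct computation gives $\dim S + \dim I = d + (k-t) = 2k + (n-\eta-2)t = 2k + (n-2)t - \eta t$.

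There is no real obstacle once the two translation steps are identified: the only content is spotting that the natural construction lives on the sunflower boundary $\dim I = k-t$, so that the remaining freedom is a $\dim S$ problem that Lemma \ref{quot} converts into a partial spread problem and Lemma \ref{fred} further converts into a $1$-dimensional-subspace-counting problem in $V(n-\eta, q^t)$. The numerical condition in the hypothesis is exactly the Gaussian binomial lower bound needed to pick the required number of projective points, so the construction works verbatim under the stated assumptions.
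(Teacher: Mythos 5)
Your proposal is correct and follows essentially the same route as the paper: reduce to a sunflower so that $\dim I = k-t$, convert the span requirement $\dim S = k+(n-\eta-1)t$ via Lemma \ref{quot} into a spanning partial $t$-spread of size $n$ in $V((n-\eta)t,q)$, and obtain that spread by field reduction (Lemma \ref{fred}) from $n$ distinct $1$-dimensional subspaces of $V(n-\eta,q^t)$, $n-\eta$ of which span the space, with the hypothesis $n\leq\frac{q^{t(n-\eta)}-1}{q^t-1}$ guaranteeing enough points. The only cosmetic difference is that you make the choice of points explicit via a basis, which the paper leaves implicit.
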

\begin{proof}
We will construct a sunflower $\mathcal{C}$ meeting the conditions. Note that for a sunflower, $\dim I=k-t$. To have $\mathcal{C}$ fulfill the equality in the theorem, we must have that $\dim S=k+(n-1)t-\eta t$. \par
By Lemma \ref{quot}, the existence of such a sunflower is equivalent to the existence of a partial $t$-spread in an $(n-\eta)t$-dimensional vector space $V((n-\eta)t,q)$, spanning that space. We can now use field reduction to guarantee the existence of $\mathcal{C}$. \par
Consider the vector space $V(n-\eta,q^t)$. Choose $n$ lines, such that the last $n-\eta$ lines span the complete space $V(n-\eta,q^t)$. Since $n-\eta<n\leq\frac{q^{t(n-\eta)}-1}{q^t-1}$, where $\frac{q^{t(n-\eta)}-1}{q^t-1}$ is the number of lines in $V(n-\eta,q^t)$, this is always possible. By Lemma \ref{fred}, this set of $n$ lines in $V(n-\eta,q^t)$ corresponds to a partial $t$-spread in $V((n-\eta)t,q)$, spanning the whole space. 
\end{proof}

Note that $\frac{q^{t(n-\eta)}-1}{q^t-1}$ is the cardinality of a $t$-spread in an $(n-\eta)t$-dimensional vector space over $\mathbb{F}_q$. By reversing the arguments used in the previous proof, it is clear that there cannot exist a \emph{$(k,k-t)$-sunflower} with $\dim S + \dim I =2k +(n-2)t-\eta t$ if $n>\frac{q^{t(n-\eta)}-1}{q^t-1}$. However, this does not exclude the existence of an example of a $(k,k-t)$-SCID with these parameters. \par

\begin{theorem}
If $1\leq\eta\leq n-2$, $0\leq\epsilon<t$, and $n\leq\frac{q^{t(n-\eta)}-1}{q^t-1}$, then there exists a $(k,k-t)$-SCID $\{\pi_1,\ldots,\pi_n\}$ such that:
\[
\dim S + \dim I =2k +(n-2)t-\eta t+\epsilon
,\]
with $S:=\langle\pi_1,\ldots,\pi_n\rangle$ and $I:=\langle\pi_i\cap\pi_j \mid i\neq j\rangle$.
\end{theorem}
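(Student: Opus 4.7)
The plan is to construct a $(k,k-t)$-sunflower (which is automatically a $(k,k-t)$-SCID), so that $\dim I$ is forced to equal $k-t$; then the target sum $2k+(n-2)t-\eta t+\epsilon$ translates to the requirement $\dim S = k+(n-\eta-1)t+\epsilon$. By Lemma \ref{quot}, producing such a sunflower is equivalent to producing a partial $t$-spread with $n$ elements in the vector space $V' := V((n-\eta)t+\epsilon,\, q)$ that spans the whole of $V'$.

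The starting point is the construction from Theorem \ref{sun}, whose hypothesis $n\leq\frac{q^{t(n-\eta)}-1}{q^t-1}$ is exactly what is assumed here. Field reduction applied to a suitable choice of $n$ lines in $V(n-\eta, q^t)$ yields a partial $t$-spread $\{\sigma_1,\ldots,\sigma_n\}$ in the codimension-$\epsilon$ subspace $W := V((n-\eta)t, q)\subset V'$ with the extra feature that the last $n-\eta$ elements $\sigma_{\eta+1},\ldots,\sigma_n$ already span $W$. I then fix an $\epsilon$-dimensional complement $E$ of $W$ inside $V'$, pick any $(t-\epsilon)$-dimensional subspace $U\subseteq\sigma_1$ (which exists because $\epsilon<t$), and replace $\sigma_1$ by $\sigma'_1:=U\oplus E$, leaving the other spread elements unchanged.

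What remains is to check that $\{\sigma'_1,\sigma_2,\ldots,\sigma_n\}$ is a partial $t$-spread spanning all of $V'$. The dimension count $\dim\sigma'_1=(t-\epsilon)+\epsilon=t$ is immediate. For disjointness, any vector in $\sigma'_1\cap\sigma_i$ with $i\geq 2$ must lie in $W$ (since $\sigma_i\subseteq W$), so its $E$-component vanishes and the vector reduces to an element of $U\cap\sigma_i\subseteq\sigma_1\cap\sigma_i=\{0\}$. Spanning follows because $E$ is captured by $\sigma'_1$, while the untouched elements $\sigma_2,\ldots,\sigma_n$ still contain $\sigma_{\eta+1},\ldots,\sigma_n$ (using $\eta\geq 1$), which by construction span $W$; hence the total span equals $W+E=V'$. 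Reversing the correspondence of Lemma \ref{quot} finally produces the desired sunflower with $\dim S = \dim V' + (k-t) = k+(n-\eta-1)t+\epsilon$.

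The main obstacle I anticipate is verifying that replacing a single spread element does not secretly violate pairwise trivial intersection; the success of the argument depends entirely on the choice $U\subseteq\sigma_1$, since any other $(t-\epsilon)$-space would force genuine checking against every $\sigma_i$. The condition $\epsilon<t$ is precisely what leaves room inside $\sigma_1$ for a positive-dimensional anchor $U$, and the condition $\eta\leq n-2$ ensures that after discarding $\sigma_1$ from the spanning set there are still at least two original spread elements available to cover $W$. Once these two mild constraints are in force, the disjointness and spanning properties transfer from the Theorem \ref{sun} construction essentially for free.
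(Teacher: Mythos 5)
Your proposal is correct and follows essentially the same route as the paper: reduce via Lemma \ref{quot} to producing an $n$-element partial $t$-spread spanning $V((n-\eta)t+\epsilon,q)$, take the field-reduction spread of Theorem \ref{sun} inside the codimension-$\epsilon$ subspace, and replace one element by $\langle U,E\rangle$ with $U$ a $(t-\epsilon)$-subspace of it and $E$ a complement of the old ambient space. Your explicit checks that the modified element still meets the others trivially and that the new set spans $V'$ are details the paper leaves implicit, but the construction is identical.
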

\begin{proof}
For $\epsilon=0$, this is exactly the previous theorem. \par
For $\epsilon>0$, we will prove the existence of a $(k,k-t)$-sunflower meeting the conditions, in a similar way as in Theorem \ref{sun}. Choose $n$ lines such that the last $n-\eta$ lines span the complete space $V(n-\eta,q^t)$. Similarly to the previous proof, we have by Lemma \ref{fred} that this set of $n$ lines in $V(n-\eta,q^t)$ corresponds to a partial $t$-spread $\{\pi_1,\ldots,\pi_n\}$ in $V((n-\eta)t,q)$, spanning this whole space. Now embed this space $V=V((n-\eta)t,q)$ in a vector space $V'=V((n-\eta)t+\epsilon,q)$. Choose an $\epsilon$-dimensional space $E$ in $V'$, intersecting trivially with $V$. Then $\langle V,E\rangle=V'$. Consider a $(t-\epsilon)$-dimensional subspace $U$ of $\pi_1$. Now replace $\pi_1$ by $\pi'_1=\langle U,E\rangle$. Then $\{\pi'_1,\pi_2,\ldots,\pi_n\}$ is a partial $t$-spread, spanning $V'$. By Lemma \ref{quot}, there exists a sunflower meeting the conditions.
\end{proof}

We have now proved that there exists a $(k,k-t)$-SCID (more precisely, a $(k,k-t)$-sunflower) with $\dim S + \dim I =N$, for any integer $N \in [2k,2k+(n-2)t]$, as long as the condition $n\leq\frac{q^{t(n-\eta)}-1}{q^t-1}$ holds. Note that a $(k,k-t)$-SCID with $\dim S +\dim I < 2k$ cannot exist. 

\subsection{Summary}
There exists a $(k,k-t)$-SCID with $n$ elements and with $\dim S + \dim I = N$,
\begin{itemize}
	\item for any integer $N \in[2k+(n-2)t,nk]$, if $(n-1)(k-t)\leq k$.
	\item for any integer $N \in [2k,2k+(n-2)t]$, if $n\leq\frac{q^{t(n-\eta)}-1}{q^t-1}$.
\end{itemize}

\end{document}